\documentclass[11pt]{amsart}

\usepackage{amsfonts,epsfig}
\usepackage{latexsym}
\usepackage{amssymb}
\usepackage{amsmath}
\usepackage{amsthm}
\usepackage{graphics}
\usepackage[all]{xy}
\usepackage[T2A]{fontenc}
\usepackage{multirow}
\usepackage{hhline}
\usepackage{array, booktabs, ctable}
\usepackage{hyperref}
\usepackage{enumerate}

\usepackage{tabularray}

\usepackage{longtable}
\usepackage{slashbox}
\usepackage{booktabs}
\usepackage{diagbox}

\newtheorem{defn}{Definition}%[section]

\newtheorem{lemma}[defn]{Lemma}

\newtheorem{thm}[defn]{Theorem}

\newtheorem{prop}[defn]{Proposition}

\newtheorem{conj}[defn]{Conjecture}

\theoremstyle{definition}
\newtheorem*{remark}{Remark}

\newcommand{\Q}{\mathbb Q}

\newcommand{\Z}{\mathbb Z}

\newcommand{\QD}{\Q(\sqrt{D})}

\title[Squares in arithmetic progression over certain non-primitive quartic number fields]{Squares in arithmetic progression over certain non-primitive quartic number fields}

\author{Enrique Gonz\'alez--Jim\'enez}
\address{Universidad Aut{\'o}noma de Madrid, Departamento de Matem{\'a}ticas, Madrid, Spain}
\email{enrique.gonzalez.jimenez@uam.es}

\author{Nguyen Xuan Tho}
\address{Hanoi University of Science and Technology, Hanoi, Vietnam}
\email{tho.nguyenxuan1@hust.edu.vn} 

\thanks{The first author is supported by Grant PID2022-138916NB-I00 funded by MCIN/AEI/10.13039/501100011033 and by ERDF A way of making Europe. The second author is supported by the Vietnam National Foundation for Science and Technology Development (NAFOSTED) (grant number 101.04-2023.21).}

\subjclass{Primary: 11B25, 11G05; Secondary: 14G05.}
\keywords{Arithmetic progressions of squares, quadratic extensions, elliptic curves, rational points}

\begin{document}
\date{\today}

%\tableofcontents

\begin{abstract}
Let $D$ be a square-free integer. Under certain conditions on $D$, we characterize  non-constant arithmetic progressions of squares over quadratic extensions of $\Q(\sqrt{D})$. 
\end{abstract}
\maketitle
\section{Introduction}
Fermat stated that there are no non-constant arithmetic progressions of four squares over $\mathbb{Q}$ in 1640 and Euler proved this in 1780. It is natural to ask for what would happen in number fields. Over quadratic fields, Xarles \cite{art17} showed that there are no non-constant arithmetic progressions of six squares and Gonz\'alez-Jim\'enez and Xarles \cite{GJX} characterized non-constant arithmetic progressions of five squares. Over cubic fields, Bremner and Siksek \cite{art5} showed that there are no non-constant arithmetic progressions of five squares. 

In this paper, we extend the results of  Xarles \cite{art17} and of Gonz\'alez-Jim\'enez and Xarles \cite{GJX}  to quadratic extensions of $\Q(\sqrt{D})$, where $D$ is a square-free integer satisfying certain conditions on the $D$-quadratic twist of the elliptic curves
$$
E_0\,:\, y^2 = x^3 - x^2 - 9x + 9 \qquad \mbox{and}\qquad E_1\,:\, y^2 = x^3 - x^2 + x.
$$
For any elliptic curve $E$ and integer $D$, we denote by $E^D$ the $D$-quadratic twist of $E$. 

{
Let $n$ be a positive integer and $K$ be a field. Let $a_1,\dots,a_n\in K$ such that $(a_1^2,a_2^2,\dots,a_n^2)$ is an arithmetic progression of length $n$. We say that this arithmetic progression is equivalent to $(s^2 a_1^2,s^2 a_2^2,\dots,s^2a_n^2)$ for any $s\in K^*$ and to the arithmetic progression $(a_n^2,\dots,a_2^2,a_1^2)$. We say that $(a_1^2,a_2^2,\dots,a_n^2)$ is properly defined over a number field $K$ if $a_1,\dots,a_n\in K$  and  $\{a_1,\dots,a_n\}\not\subset F$ for any proper subfield $F$ of $K$.} Let $i=\sqrt{-1}$.

\

The main theorems of the paper are the following.

\begin{thm}\label{main}
Let $D$ be a square-free integer, $D\ne -1,\pm 2,3$, such that $\operatorname{rank}_{\Z}E_1^{\pm D}(\Q)= 0$ and $K$ a quadratic extension of $\QD$. Then
\begin{itemize}
\item[(A)] If $\operatorname{rank}_{\Z}E_0^{D}(\Q)=0$, then there does not exist any non-constant arithmetic progression of five squares properly defined over $K$.

\item[(B)] If $\operatorname{rank}_{\Z}E_0^{D}(\Q)\ne 0$ and the class number of $\QD$ is $1$, then a non-constant arithmetic progression of five squares properly defined over $K$ is, up to equivalence, of the form $\left(a^2,b^2,c^2,\alpha \, d^2,e^2\right)$, where $a,b,c,d,e,\alpha \in\Q(\sqrt{D})$ and $\alpha$ is non-square.
\end{itemize}
\end{thm}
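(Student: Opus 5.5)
We reduce, as in Gonz\'alez-Jim\'enez and Xarles \cite{GJX}, to the study of points on a genus-five curve and its elliptic quotients. A non-constant arithmetic progression of five squares $(a_1^2,\dots,a_5^2)$ over a field $K$ corresponds to a $K$-point on the curve $C_5$ cut out by $x_{j-1}^2+x_{j+1}^2=2x_j^2$ ($j=2,3,4$) in $\mathbb{P}^4$. Since $K$ is a quadratic extension of $\QD$, write $K=\QD(\sqrt{\beta})$. The key observation is that the Galois conjugate $\sigma$ of $K/\QD$ acts on $(a_1,\dots,a_5)$. After scaling, each $a_j^2$ may be assumed to lie in $\QD$ up to the common factor. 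Concretely, $a_j^2$ and $\sigma(a_j)^2$ form two progressions whose sum and difference are again progressions, and one shows that each $a_j$ is either in $\QD$ or in $\sqrt{\beta}\,\QD$ (after scaling by an element of $K^*$). Thus the progression is, up to equivalence, $(\epsilon_1 b_1^2,\dots,\epsilon_5 b_5^2)$ with $b_j\in\QD$ and $\epsilon_j\in\{1,\beta\}$. The pattern $(\epsilon_j)$ ranges over subsets of $\{1,\dots,5\}$ modulo reversal.

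For each sign pattern we consider which triples or quadruples of consecutive, or equally spaced, terms lie in the same class. Four terms among the five in the same class give a $\QD$-point on a genus-one curve, namely the curve of four squares in arithmetic progression, or one of its variants with spacing $(1,2,3,4)$ or $(1,2,4,5)$. These curves are isomorphic over $\Q$ to $E_1$, $E_0$ (for instance $E_0$ arises from the terms $a_1,a_2,a_4,a_5$), or to their twists by $\beta$ in the mixed patterns. For an elliptic curve $E/\Q$ we have $\operatorname{rank} E(\QD)=\operatorname{rank}E(\Q)+\operatorname{rank}E^D(\Q)$. Hence the hypotheses $\operatorname{rank}E_1^{\pm D}(\Q)=0$ together with $\operatorname{rank}E_1(\Q)=0$ and $\operatorname{rank}E_1^{-1}(\Q)=0$ make $E_1(\QD)$ and $E_1^{-1}(\QD)$ finite. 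The twist by $-1$ appears because of the $i$ symmetry. The torsion over $\QD$ can be computed, and the exclusion $D\neq -1,\pm2,3$ rules out extra torsion. Every pattern forcing a point on an $E_1$-type quotient then yields only trivial progressions. The remaining patterns are those with a single odd-class term, in position $4$ or equivalently $2$ (the middle position gives $E_1$ via $a_1,a_2,a_4,a_5$ in the other class). Pattern $1$ or $5$ forces $a_2,\dots,a_5$ and hence a point on $E_1$.

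For the position-$4$ pattern, the terms $a_1,a_2,a_3,a_5$ give a $\QD$-point on the quotient isomorphic to $E_0$. Since $\operatorname{rank}E_0(\Q)=0$, the rank over $\QD$ is $\operatorname{rank}E_0^D(\Q)$. In case (A) this is $0$, so only torsion points occur, and these give constant progressions, proving (A). In case (B), the class-number-one hypothesis is used to normalize $\beta$ to a square-free element $\alpha$ of $\mathcal{O}_{\QD}$ (unique up to squares of units, absorbed by scaling). This gives exactly the form $(a^2,b^2,c^2,\alpha d^2,e^2)$ with $\alpha$ non-square, and the condition "properly defined" forces $d\ne 0$.

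The main obstacle is the first step: rigorously proving the dichotomy $a_j\in\QD\cup\sqrt{\beta}\QD$. We would attempt this by taking traces. Writing $a_j=u_j+v_j\sqrt\beta$, the relations $a_{j-1}^2+a_{j+1}^2=2a_j^2$ split into a $\QD$-rational progression relation on $u_j^2+\beta v_j^2$ and a bilinear one on $u_jv_j$. One then needs to show these force $u_jv_j=0$ for all $j$, possibly by again invoking finiteness of $E_1(\QD)$. Secondary work is the bookkeeping of all torsion points over $\QD$.
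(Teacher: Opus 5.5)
Your outline has a genuine gap at its first step and an error at its last.

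\textbf{The dichotomy is not proved.} The claim $a_j\in\QD\cup\sqrt{\beta}\,\QD$ (after scaling) is not a preliminary reduction. It is essentially the entire content of the theorem, and you give no mechanism for it. Writing $a_j=u_j+v_j\sqrt\beta$ only tells you that $(u_j^2+\beta v_j^2)_j$ and $(u_jv_j)_j$ are arithmetic progressions in $\QD$. That is just a $\QD$-point on the Weil restriction of the genus-$5$ curve, which is a surface, and nothing in that system forces $u_jv_j=0$. The rank hypotheses on $E_1^{\pm D}$ have to enter exactly here, and you do not say how.

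The paper does it with Mordell's trick on the parametrization $[a:c:e]=[t^2-2t-1:t^2+1:t^2+2t-1]$. If $r=(t-1/t)^2\notin\QD$, then $K=\QD(r)$, so $(r+4)(r^2+4r+16)=(\lambda+\mu r)^2$ with $\lambda,\mu\in\QD$. The cubic $(x+4)(x^2+4x+16)-(\lambda+\mu x)^2$ then vanishes at $r$ and at its conjugate. Hence its third root lies in $\QD$ and gives a point of $C_1(\QD)$, and likewise for $C_2$. These groups are finite and explicitly known, so comparing the resulting quadratic factors $Q(x)$ rules this out. Repeating the argument with $s=t-1/t$ (curves $C_1,C_3$) and with $w=t^4$ (curves $C_4,C_5$) gives $t\in\QD$. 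That yields your dichotomy in a sharper form: $a,c,e\in\QD$, and only $b,d$ can lie in $\sqrt\beta\,\QD$.

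Note also that if your mixed patterns really led to twists of $E_1$ by an arbitrary $\beta\in\QD$, the hypotheses would give no control over their ranks. You would have to land only on untwisted quotients, e.g. the curve saying that a product of four terms containing an even number of $\beta$-class terms is a square.

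\textbf{The rank of $E_0(\Q)$ is wrong.} In fact $\operatorname{rank}_{\Z}E_0(\Q)=1$, not $0$: the point $(5,8)$ has infinite order because $E_0(\Q)_{\operatorname{tors}}=E_0[2]$. So in case (A), $E_0(\QD)\supseteq E_0(\Q)$ is infinite. The position-$4$ pattern does produce non-constant progressions, e.g. $(7^2,13^2,17^2,409,23^2)$ for $K=\QD(\sqrt{409})$. Your conclusion that ``only torsion points occur, giving constant progressions'' is therefore false.

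What (A) actually needs is $E_0(\QD)=E_0(\Q)$, and this does not follow from equality of ranks. The paper's Lemma uses that torsion does not grow for $D\neq-2$. It then checks that no point of $E_0(\Q)$ acquires a new half in $E_0(\QD)$; this fails exactly for $D=2$, where $(1+2\sqrt2,-4)$ is a half of $(5,8)$. With that in hand, the four square-class terms can be taken in $\Q$, so the remaining term is in $\Q$ too. The progression is then defined over a quadratic field $\Q(\sqrt m)$, a proper subfield of $K$, which is why it is not \emph{properly} defined over $K$.

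\textbf{Torsion growth is not excluded.} The exclusion $D\neq-1,\pm2,3$ does not remove torsion growth. $D=-3$ is allowed, and $E_1$ and $E_1^{-1}$ gain torsion over $\Q(\sqrt{-3})$; the paper treats this case separately.
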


\begin{thm}\label{maincor}
There exists a non-constant arithmetic progression of five squares properly defined over a quadratic extension $K$ of
\begin{itemize}
    \item $\Q(i)$ if and only if $K=\Q(i,\sqrt{2})$. In this case, $(-2,-1, 0,1,2)$ is the unique, up to equivalence, non-constant arithmetic progression of five squares properly defined over $K$.
     \item $\Q(\sqrt{3})$ if and only if $K=\Q(\sqrt{3},\sqrt{2})$. In this case, $(0,1,2,3,4)$ is the unique, up to equivalence, non-constant arithmetic progression of five squares properly defined over $K$.
     \item $\Q(\sqrt{2})$ if and only if $K=\Q(\sqrt{2},\sqrt{3})$, $K=\Q(\sqrt{2},i)$ or $K=\Q(\sqrt{2},\sqrt{\alpha})$, for some squarefree $\alpha \in\Q(\sqrt{2})$. In the first case, $(0,1,2,3,4)$ is the unique, up to equivalence, non-constant arithmetic progression of five squares properly defined over $\Q(\sqrt{2},\sqrt{3})$; and in the second case, $(-2,-1, 0,1,2)$ is the unique, up to equivalence, non-constant arithmetic progression of five squares properly defined over $\Q(\sqrt{2},i)$. In the last case, a non-constant arithmetic progression of five squares properly defined over $K$ is, up to equivalence, of the form $\left(a^2,b^2,c^2,\alpha \, d^2,e^2\right)$ where $a,b,c,d,e \in\Q(\sqrt{2})$. 
     \item $\Q(\sqrt{-2})$ if and only if $K=\Q(\sqrt{-2},i)$ or $K=\Q(\sqrt{-2},\sqrt{\alpha})$, for some squarefree $\alpha \in\Q(\sqrt{-2})$. In the first case, $(-2,-1, 0,1,2)$ is the unique, up to equivalence, non-constant arithmetic progression of five squares properly defined over $\Q(\sqrt{-2},i)$. In the second case, a non-constant arithmetic progression of five squares properly defined over $K$ is, up to equivalence, of the form $\left(a^2,b^2,c^2,\alpha \, d^2,e^2\right)$ where $a,b,c,d,e \in\Q(\sqrt{-2})$.
\end{itemize}  
\end{thm}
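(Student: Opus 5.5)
The plan is to treat each of the four fields $\Q(\sqrt{D})$, $D\in\{-1,3,2,-2\}$, by the same reduction used for Theorem~\ref{main}, but now working explicitly with the finitely many (or explicitly described) points that appear. These are the $D$ excluded from Theorem~\ref{main}; there the twists $E_1^{\pm D}$ or $E_0^{D}$ have positive rank or extra torsion, which is what produces the exceptional progressions.

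\textbf{Step 1 (descent to the base field).} Let $(a_1^2,\dots,a_5^2)$ be properly defined over $K=\QD(\sqrt{\alpha})$. Each $a_j^2$ is fixed by the nontrivial automorphism of $K/\QD$ up to sign, so $a_j\in\QD$ or $a_j\in\sqrt{\alpha}\,\QD$. Hence $a_j^2=\epsilon_j b_j^2$ with $b_j\in\QD$ and $\epsilon_j\in\{1,\alpha\}$. After scaling by $s\in K^*$ and using the reversal symmetry, one reduces to a short list of sign/twist patterns $(\epsilon_1,\dots,\epsilon_5)$ with at least one $\epsilon_j=\alpha$.

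\textbf{Step 2 (patterns with three consecutive or symmetric untwisted terms).} Any three terms of an arithmetic progression of squares in positions $i<j<k$ with all $\epsilon=1$ give a point on a curve isomorphic over $\QD$ to $E_0$ or $E_1$ (depending on the gaps), taken over $\QD$. Its $\QD$-points decompose via $E(\QD)\otimes\Q\cong (E(\Q)\oplus E^D(\Q))\otimes\Q$. For each of the four fields I compute these groups. The key inputs are: $E_1^{\pm D}$ and $E_0^{D}$ have rank $0$ for $D=-1,3$; for $D=\pm2$ the twist $E_0^{D}$ has positive rank while $E_1^{\pm D}$ has rank $0$. Where all relevant ranks vanish, $E(\QD)$ is finite. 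Enumerating its torsion points and pulling back gives only trivial progressions, except for the points yielding $(-2,-1,0,1,2)$ (needing $\sqrt{-1},\sqrt{2}$ or $\sqrt{-2}$) and $(0,1,2,3,4)$ (needing $\sqrt2,\sqrt3$). Checking which of these are properly defined over a quadratic extension of the given base field produces exactly the listed fields $K$. For example, over $\Q(i)$ the progression $(-2,-1,0,1,2)$ has roots in $\Q(i,\sqrt2)$, while $(0,1,2,3,4)$ would need both $\sqrt2$ and $\sqrt3$, a degree-$8$ field, so it is excluded.

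\textbf{Step 3 (the positive-rank cases $D=\pm2$).} Here I follow the proof of Theorem~\ref{main}(B); the class number of $\Q(\sqrt{\pm2})$ is $1$. The rank-zero curves $E_1^{\pm D}$ kill every pattern except the one where only the fourth term (up to reversal) is twisted. That pattern is governed by $E_0^{D}$, which has infinitely many points. This gives the family $(a^2,b^2,c^2,\alpha d^2,e^2)$, and it is the source of the ``for some $\alpha$'' clause. Conversely, each such point with $\alpha$ non-square gives a progression properly defined over $\QD(\sqrt\alpha)$. This proves the ``if'' direction for these $K$. Uniqueness up to equivalence in the special fields $\Q(\sqrt{\pm2},i)$ and $\Q(\sqrt2,\sqrt3)$ also follows from the torsion enumeration in Step 2.

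\textbf{Main obstacle.} I expect the hardest part to be Step 2 for $D=-1$ and $D=3$. These are exactly the values where the hypotheses of Theorem~\ref{main} fail, presumably because some $E_1^{\pm D}$ or the torsion over $\QD$ grows. There I must compute $E(\QD)$ fully, including ranks of twists (via $2$-descent or Magma) and torsion. I would also need an extra genus-one or genus-$5$ curve argument, in the style of Xarles/Gonz\'alez-Jim\'enez–Xarles, to rule out the remaining patterns. I would try first to use the known classification of five squares over quadratic fields~\cite{GJX} applied to the biquadratic subfields to shortcut these cases.
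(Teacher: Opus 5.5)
There is a genuine gap, and it is in Step~1, which you treat as routine but which is the core of the paper's proof. There is no reason for the nontrivial automorphism $\sigma$ of $K/\QD$ to fix each $a_j^2$, even after a common rescaling: $\sigma$ only sends one $K$-point of the genus-$5$ curve $V$ to another.

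The claim that the terms can be scaled into $\QD$ is exactly Proposition~\ref{prop}\,(1)--(3), i.e.\ that the conic parameter $t$ of \eqref{eq2} lies in $\QD$. The paper proves this by Mordell's method. Suppose $r=(t-1/t)^2$, then $s=t-1/t$, then $w=t^4$, generated $K$ over $\QD$. Write the relevant square in $K$ as $(\alpha+\beta x)^2$ evaluated at that generator, and use $f(x)-(\alpha+\beta x)^2=Q(x)(x-x_0)$ with $Q$ the minimal polynomial over $\QD$. This produces $\QD$-points on $C_1$ and $C_2$, then on $C_1$ or $C_3$, then on $C_4$ and $C_5$. The hypothesis $\operatorname{rank}_{\Z}E_1^{\pm D}(\Q)=0$, which holds for all $D\in\{-1,\pm2,3\}$, makes these point sets finite and explicit (Tables~\ref{points} and~\ref{pointsC4}). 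Every resulting case is then elementary or contradictory.

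Your proposal never uses the rank-zero hypothesis for this, so the whole descent is assumed rather than proved. Falling back on \cite{GJX} for subfields cannot replace it, since a progression properly defined over $K$ lies in no proper subfield. Note also that $t\in\QD$ gives more than your Step~1: it puts $a,c,e$ simultaneously in $\QD$, which leaves only the four $(b,d)$-patterns of Section~\ref{tinQ}.

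Step~2 also fails as written, and Step~3 rests on a false premise. Three terms of an arithmetic progression of squares satisfy a single diagonal ternary quadratic relation, so they lie on a conic, not on a curve related to $E_0$ or $E_1$. You need four terms (positions $1,2,3,5$ give $C_0\cong E_0$), or, when $b$ and $d$ are both twisted, the product $G(t)G(-t)=t^8+14t^4+1$, which gives a point on $C_6$.

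Moreover, $E_0$ has rank $1$ over $\Q$, so $E_0(\QD)$ is infinite for every $D$, and enumerating torsion cannot be the mechanism. What matters is whether $E_0(\QD)=E_0(\Q)$. For $D=-1,3$ this holds (Lemma~\ref{newlemma}), so Proposition~\ref{prop}\,(4) gives $t\in\Q$ and one of $G(\pm t)$ in $\Q^2$. The progression is then defined over a quadratic field, hence not properly over $K$; no extra genus-one or genus-$5$ argument is needed.

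For $D=\pm2$, by the Remark following Theorem~\ref{maincor}, $\operatorname{rank}_{\Z}E_0^{\pm2}(\Q)=0$, not positive. The family $(a^2,b^2,c^2,\alpha d^2,e^2)$ comes from $E_0(\QD)\supsetneq E_0(\Q)$: the point $T_2'$ with $2T_2'=T_2$ for $D=-2$, and $P'=(1+2\sqrt2,4)$ for $D=2$. These allow $t\in\QD\setminus\Q$. The form then follows as in Theorem~\ref{main}(B), using unique factorization in $\QD$ and \cite[Proposition~5.2]{GJX} to see that $\alpha$ is not a square.
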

\begin{remark}
Note that $\operatorname{rank}_{\Z}E_1^{\pm D}(\Q)= 0$ and $\operatorname{rank}_{\Z}E_0^{D}(\Q)= 0$ if $D\in\{-1,\pm 2,3\}$.
\end{remark}

\begin{thm}\label{six}
Let $D$ be a square-free integer such that 
$$
\operatorname{rank}_{\Z}E_1^{\pm D}(\Q)= 0\,,
$$
and $\operatorname{rank}_{\Z}E_0^{D}(\Q)=0$ or if $\operatorname{rank}_{\Z}E_0^{D}(\Q)\ne 0$  the class number of $\QD$ is $1$. Then there does not exist any non-constant arithmetic progression of six squares properly defined over any quadratic extension of $\QD$. 
\end{thm}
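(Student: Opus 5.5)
Theorem \ref{six} should follow from Theorems \ref{main} and \ref{maincor}, because any six-term progression of squares contains two five-term subprogressions. Suppose $(a_1^2,\dots,a_6^2)$ is a non-constant arithmetic progression of six squares properly defined over a quadratic extension $K$ of $\QD$. Then $(a_1^2,\dots,a_5^2)$ and $(a_2^2,\dots,a_6^2)$ are non-constant arithmetic progressions of five squares over $K$. Neither need be properly defined over $K$, since its entries might all lie in a proper subfield $F\subsetneq K$. Such an $F$ is $\Q$, $\QD$, or one of the other two quadratic subfields of $K$ when $K/\Q$ is biquadratic; a non-Galois quartic $K$ has only $\QD$ as a proper nontrivial subfield. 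Over $\Q$ there are no non-constant progressions of four squares (Fermat--Euler). Over any quadratic field $F$, the five-square progressions are classified by Gonz\'alez-Jim\'enez--Xarles \cite{GJX}: up to equivalence only $(-2,-1,0,1,2)$ over $\Q(i)$ and $(0,1,2,3,4)$ over $\Q(\sqrt{3})$ (and similar small examples). So in each case a five-term subprogression is either properly defined over $K$, or lies over a quadratic subfield and is one of these explicit progressions.

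\textbf{Case (A): $\operatorname{rank}E_0^D(\Q)=0$, $D\notin\{-1,\pm2,3\}$.} By Theorem \ref{main}(A), neither subprogression is properly defined over $K$, so each lies over a quadratic subfield of $K$ and is, up to equivalence, one of the explicit progressions of \cite{GJX}. These progressions overlap in four consecutive terms, so the common difference and the four middle terms are fixed up to scaling. If the first subprogression is $s^2(-2,-1,0,1,2)$, the sixth term is $3s^2$; if it is $s^2(0,1,2,3,4)$, the sixth term is $5s^2$. Reversed orderings give $-3s^2$ and $-s^2$. I would check that in each case the resulting six squares force $\sqrt{-1},\sqrt2,\sqrt3$ (or $\sqrt5$) into $K$, or force the six-term progression over $\Q$ or a quadratic field, contradicting \cite{art17}. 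The first option is impossible because $K$ is quartic and contains at most two independent square roots; the second is ruled out directly by \cite{art17}.

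\textbf{Case (B): rank nonzero and class number $1$.} Apply Theorem \ref{main}(B) or Theorem \ref{maincor} (for the exceptional $D$) to each properly defined five-term subprogression, with the same case split for those that are not properly defined. In the generic situation both subprogressions have the shape $(a^2,b^2,c^2,\alpha d^2,e^2)$ with $a,\dots,e\in\QD$, possibly reversed. The first gives $a_1,a_2,a_3,a_5\in\QD$ up to a common scalar. The second, read forward, gives $a_2,a_3,a_4,a_6\in\QD$. Read backward, it puts the non-square term in position $3$, forcing $a_3\notin\QD$, which contradicts the first. Hence $a_4\in\QD$, and then all six terms lie in $\QD$ up to scaling, so the progression is not properly defined, a contradiction. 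The exceptional cases from Theorem \ref{maincor} are handled by the explicit matching of Case (A).

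I expect the main obstacle to be the bookkeeping of the common scalar $s$. Equivalence permits $s\in K^*$, so ``entries in $\QD$ up to equivalence'' must be tracked carefully across the two overlapping subprogressions. I would normalise by dividing by a fixed nonzero term, say $a_2^2$, so that ratios are scale-free. Then $a_i^2/a_2^2\in\QD$, and the question becomes which of these ratios are squares in $\QD$.
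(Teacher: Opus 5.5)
\textbf{The gap: a misdescribed classification.} Your Case (A), and the ``same case split'' you invoke in Case (B), rest on a false description of \cite{GJX}.

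The progressions $(-2,-1,0,1,2)$ and $(0,1,2,3,4)$ are not defined over any quadratic field. Even after rescaling, the ratios of their roots generate $\Q(i,\sqrt2)$ and $\Q(\sqrt2,\sqrt3)$; they are exactly the quartic examples of Theorem~\ref{maincor}. Conversely, over quadratic fields there are infinitely many non-elementary five-term progressions of squares, coming from the non-torsion points of $E_0(\Q)$. An example is $(7^2,13^2,17^2,409,23^2)$ over $\Q(\sqrt{409})$. So matching against a short explicit list cannot dispose of a block whose roots lie in a quadratic subfield of $K$.

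What you are missing is the short argument behind the paper's first step, that one of the two blocks is properly defined over $K$. Suppose the roots of the first block lie in a proper subfield $F_1$ and those of the second in $F_2$. Either one of $F_1,F_2$ contains the other, so all six roots lie in a proper subfield. Or $F_1\cap F_2=\Q$, so $a_2^2,\dots,a_5^2$ are four rational squares in progression, and Fermat--Euler makes everything constant. With this, Case (A) follows at once from Theorem~\ref{main}(A). Your explicit extension of $(-2,-1,0,1,2)$ and $(0,1,2,3,4)$ for $D=-1,3$ is what the paper does, and it is fine.

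\textbf{Case (B).} Comparing the square-class patterns of the two overlapping blocks is the right mechanism, and it is genuinely needed. The paper's paragraph for (B) uses only one block. From $sa^2=x_1^2$ and $sd^2=\alpha x_4^2$ with $a,d\in K$, one only gets $\alpha\in K^2$, not $\alpha\in\QD^2$.

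As written, though, your analysis is incomplete in three ways.
\begin{enumerate}
\item You omit the orientation ``first block reversed, second forward''. It also fails: the first block gives $a_5/a_3\in\QD$, the second gives $a_5/a_3\notin\QD$.
\item You do not treat a properly defined block sitting next to one that lies in a quadratic subfield other than $\QD$.
\item ``All six roots in $\QD$ up to scaling'' does not by itself contradict properness, since the scalar may lie in $K\setminus\QD$. The real contradiction is that the first block forces $(a_4/a_3)^2\notin\QD^2$.
\end{enumerate}

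\textbf{A uniform repair.} You can bypass properness entirely. Neither block is elementary: a constant block makes the whole progression constant, and a zero term would put $i,\sqrt2,\sqrt3$ or $\sqrt2,\sqrt3,\sqrt5$ into the quartic field $K$. So by part (3) of Proposition~\ref{prop}, each block has its parameter $t\in\QD$. In the case analysis in the proof of part (4), the cases $\gamma_1=\gamma_2=0$ and $\delta_1=\delta_2=0$ are excluded using only $C_6(\QD)=C_6(\Q)$. That analysis shows the first, third and fifth roots of a block lie in one coset $\lambda\QD^{*}$, while exactly one of its second and fourth roots does. The first block then gives $a_5/a_3\in\QD$, and the second gives $a_5/a_3\notin\QD$. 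This settles every orientation and subfield configuration at once, and uses neither the rank of $E_0^D$ nor the class number.
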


\begin{remark}
The following list shows all the square-free integers $D$, $|D|<200$, satisfying $\operatorname{rank}_{\Z}E_1^{\pm D}(\Q)= 0$ and  $\operatorname{rank}_{\Z}E_0^{D}(\Q)= 0$\,:
$$
-158, -123, -62, -51, -3, -2, -1, 2, 3, 7, 31, 51, 62, 79, 103, 110, 123, 127,
151, 158, 194, 195, 199.
$$
If we allow $\operatorname{rank}_{\Z}E_0^{D}(\Q)\ne 0$ and the class number of $\QD$ is $1$ then $D\in\{-7, 38, 86\}$. 
\end{remark}

\begin{remark}
There exist square-free integers $D$ and non-constant arithmetic progressions of six squares properly defined over a quadratic extension of $\QD$. For example, when $D=409$ then $$(7^2, 13^2, 17^2, (\sqrt{409})^2, 23^2, (\sqrt{649})^2)$$ is an arithmetic progression properly defined over $\Q(\sqrt{409},\sqrt{649})$.
\end{remark}

\begin{conj}\label{Conj}
There are infinitely many non-constant arithmetic progressions of five squares pro\-perly defined over a quadratic extension $K_1$ (resp. $K_2$) of $\Q(\sqrt{-2})$ (resp. of $\Q(\sqrt{2}))$.
\begin{itemize}
\item[(i)] In the case $\Q(\sqrt{-2})$, any of them are, up to equivalence, of the form $\left(a^2,b^2,-2c^2,-m d^2,-2e^2\right)$, where $a,b,c,d,e,m\in\Z$ and $m>0$ is square-free. In particular, $K_1=\Q(\sqrt{-2},\sqrt{-m})$. 
\item[(ii)] In the case $\Q(\sqrt{2})$, any of them are, up to equivalence, of the form $\left(2 a^2,b^2,2c^2,m,e^2\right)$, where $a,b,c,e,m\in\Z$ and $m$ is square-free. In particular, $K_2=\Q(\sqrt{2},\sqrt{m})$. 
\end{itemize}
\end{conj}

Conjecture \ref{Conj} is supported by the calculation in Section \ref{sect_para}. For the cases $D\in\{-7, 38, 86\}$, there is not a similar conjecture.

Our approach is different from the approach in \cite{GJX} and \cite{art17}. We follow the approach in Mordell's paper \cite{art12}, where he gave alternative proofs to the results of Aigner \cite{art1} and Faddeev \cite{art7} on the solution of the equation $x^4+y^4=1$ in quadratic and cubic number fields. Mordell's approach has the advantage that it is very concrete in calculation.

Another important ingredient  is the theory concerning the growth of the torsion subgroup of elliptic curves under base change. Upon assuming $\operatorname{rank}_{\Z}E_1^{\pm D}(\Q)= 0$, it becomes imperative to determine the torsion subgroup of certain elliptic curves defined over $\Q$ over any quadratic field. The answer to this inquiry lies within the LMFDB online database  \cite{lmfdb}. The theory of torsion growth has undergone extensive scrutiny in recent years. Particularly noteworthy references are \cite{GJN1,GJN2} and \cite{GJT1,GJT2,N} (for quadratic fields). 

In the following subsection we include all the elliptic curves that are relevant to this article, together with the necessary information concerning rational points over quadratic fields. This material will be used in the proofs of the previous theorems.

\subsection*{Data of some elliptic curves}
Table \ref{TableC} shows the elliptic curves used in this article. For each curve \( C_k \), with \( k = 0,1,\dots,6 \), the third column gives the corresponding label in the LMFDB database \cite{lmfdb}, while the fourth column lists the corresponding \(\mathbb{Q}\)-isomorphic elliptic curve.

\begin{center}
\renewcommand{\arraystretch}{1.3}
\begin{longtable}{|l|l|l|l|}
\caption{Curves} \label{TableC} \\
\hline 
$C_0$ & $y^2 = x^4-2x^3+2x^2+2x+1$  &  \href{https://www.lmfdb.org/EllipticCurve/Q/192a2/}{\texttt{192.a2}} &  $E_0$\\
\hline 
$C_1$& $y^2 = (x+4)(x^2+4x+16)$  & \href{https://www.lmfdb.org/EllipticCurve/Q/24a4/}{\texttt{24.a5}} & $E_1$\\
\hline
$C_2$& $y^2 = x(x^2+4x+16)$  & \href{https://www.lmfdb.org/EllipticCurve/Q/48a4/}{\texttt{48.a5}}&  $E^{-1}_1$\\
\hline 
$C_3$& $y^2 = x(x+4)(x^2+4x+16)$ &  \href{https://www.lmfdb.org/EllipticCurve/Q/24a4/}{\texttt{24.a5}}&  $E_1$\\
\hline 
$C_4$& $y^2 = x(x^2+14x+1)$ &  \href{https://www.lmfdb.org/EllipticCurve/Q/24a3/}{\texttt{24.a2}}&  $E_4$\\
\hline 
$C_5$ & $y^2 = x^4+4x^2+16$ &  \href{https://www.lmfdb.org/EllipticCurve/Q/48a1/}{\texttt{48.a4}} &  $E^{-1}_6$\\
\hline 
$C_6$ & $y^2 = x^4+14x^2+1$  &  \href{https://www.lmfdb.org/EllipticCurve/Q/24a1/}{\texttt{24.a4}} &  $E_6$\\
\hline
\end{longtable}
\end{center}
Note that the elliptic curves $E_1,E_4$ and $E_6$ belongs to the same $\mathbb Q$-isogeny class.  In particular, for any integer $D$, the ranks of any $D$-quadratic twist coincide. In particular, we have $\operatorname{rank}_{\Z}C_k(\Q)=0$ for $k=1,\dots,6$. Let $E$ be an elliptic curve defined over $\Q$. Using the known formula (see \cite{Kra})
$$
\operatorname{rank}_{\Z}E(\QD)=\operatorname{rank}_{\Z}E(\Q)+
\operatorname{rank}_{\Z}E^D(\Q),
$$
we obtain that $\operatorname{rank}_{\Z}E_1^{\pm D}(\Q)= 0$ is equivalent to  $\operatorname{rank}_{\Z}C_k(\QD)=0$ for $k=1,\dots,6$. Assuming $\operatorname{rank}_{\Z}E_1^{\pm D}(\Q)= 0$ for the square-free integer $D$, to compute $C_k(\QD)$, for $k=1,\dots,6$, we need to compute the torsion subgroup over $\QD$. In our particular case, given an elliptic curve $E$ defined over $\Q$, there are only a finite number of square-free integers $D$ such that $E(\Q)_{\operatorname{tors}}\ne E(\QD)_{\operatorname{tors}}$. If the conductor of $E$ is less than $400000$, these square-free integers appear in the LMFDB database \cite{lmfdb} and are listed in Table \ref{growth}.
\begin{center}
\begin{longtable}{|c|c|}
\caption{Torsion growth over quadratic fields}\label{growth}\\
\hline 
$E$ & \mbox{$D$ such that $E(\Q)_{\operatorname{tors}}\ne 
E(\QD)_{\operatorname{tors}}$}\\
\hline 
 
$E_0$ & $-2$ \\
\hline
$E_1$   &  $-1$, $\pm 3$ \\
\hline
$E^{-1}_1$     &  $-1$,$\pm 3$\\
\hline 
$E_4$ & 2,3,6\\
\hline
$E_6$ & $\emptyset$ \\
\hline 
$E_6^{-1}$ & $-1$,$\pm 3$\\
\hline
\end{longtable}
\end{center}
Note that for those elliptic curves and values of $D$, we have $\operatorname{rank}_{\Z}E^D(\Q)=0$ except in the case $E^6_4$. Finally, for any square-free integer $D$, Tables \ref{points} and \ref{pointsC4}  show  $C_k(\QD)$ for $k=1,\dots,6$. When $D=-1,2,\pm 3$, the points at the second column correspond to $C_k(\Q)$ and they must be added to the corresponding set $C_k(\QD)$.

\renewcommand{\arraystretch}{1.3}
\begin{table}[htbp] 
\begin{center} \caption{$C_k(\QD)$ for $k=1,2,3,5,6$.}\label{points}
\begin{tabular}{|c|c|c|c|c|}
\cline{2-5} 
 \multicolumn{1}{c|}{}
%----------------------------------------------
 & $D\ne -1,\pm 3$ & $D=-1$ & $D=-3$ & $D=3$  \\ 
\hline
$C_1$ & $\begin{array}{c}(-4,0)\\ (0,\pm 8) \end{array}$& $(-4\pm 4i,\pm8)$ & $\begin{array}{c}(-8,\pm8\sqrt{-3})\\(-2\pm 2 \sqrt{-3},0)\end{array}$ & $\begin{array}{c}
(4-4\sqrt{3},\pm(24-16\sqrt{3}))\\ 
(4+4 \sqrt{3},\pm(24+16\sqrt{3}))
\end{array}$  \\
\hline
$C_2$ & $(0,0)$ & $\begin{array}{c}(\pm 4i,\pm 8i)\\(-4,\pm 8i) \end{array}$& $(-2\pm 2\sqrt{-3},0)$ & $(4,\pm 8\sqrt{3}) $ \\ 
\hline
$C_3$ & $\begin{array}{c} (0,0)\\(-4,0) \end{array}$& $(-2\pm 2 i,\pm 8i)$ & $\begin{array}{c}(-2\pm 2\sqrt{-3},0)\\(-2,\pm 4 \sqrt{-3})\end{array}$ & $(-2\pm 2 \sqrt{3},\pm 8\sqrt{3})$  \\
\hline
$C_5$ & $(0,\pm 4)$ & $(\pm 2i,\pm 4)$ & $(\pm1\pm\sqrt{-3},0)$ & $(\pm 2,\pm 4\sqrt{3})$  \\ 
\hline
$C_6$ & $\begin{array}{c} (0,\pm 1)\\ (\pm 1,\pm 4) \end{array}$ & $-$ & $-$ & $- $ \\ 
\hline
\end{tabular} 
\end{center}
\end{table}

\renewcommand{\arraystretch}{1.3}
\begin{table}[htbp] 
\begin{center} \caption{$C_4(\QD)$}\label{pointsC4}
\begin{tabular}{|c|c|c|c|}
\cline{2-4} 
 \multicolumn{1}{c|}{} & $D\ne 2,3,6$ & $D=3$ & $D=2$  \\ 
\hline
$C_4$ & $\begin{array}{c} (0,0)\\ (1,\pm 4)  \end{array}$ & $\begin{array}{c}(-1,-2\sqrt{3})\\ (7\pm 4\sqrt{3},0)\end{array}$ & $\begin{array}{c}
(-3-2\sqrt{2},\pm(8+6\sqrt{2}))\\ 
(-3+2\sqrt{2},\pm(8-6\sqrt{2}))\\ 
\end{array}$\\
   \hline
\end{tabular} 
\end{center}
\end{table}

\newpage

\begin{lemma}\label{newlemma}
Let $D$ be a square-free integer such that $\operatorname{rank}_{\Z}E_0^{D}(\Q)=0$. Then $E_0(\Q(\sqrt{D}))=E_0(\Q)$, except when $D=\pm 2$.
\end{lemma}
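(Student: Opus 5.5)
Since $E_0$ is defined over $\Q$, we have $E_0(\QD)\supseteq E_0(\Q)$, and equality amounts to two facts. First, $E_0(\QD)$ must have no extra free part. Second, it must have no extra torsion.

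For the rank we use the formula recalled above (see \cite{Kra}),
$$
\operatorname{rank}_{\Z}E_0(\QD)=\operatorname{rank}_{\Z}E_0(\Q)+\operatorname{rank}_{\Z}E_0^D(\Q).
$$
The second summand vanishes by hypothesis. The first vanishes because $E_0$ is the curve \texttt{192.a2}, which has rank $0$ over $\Q$; this is also the case $D=1$ of the remark after Theorem \ref{maincor}. Hence $E_0(\QD)$ is a finite group, so $E_0(\QD)=E_0(\QD)_{\operatorname{tors}}$ and $E_0(\Q)=E_0(\Q)_{\operatorname{tors}}$.

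For the torsion we use Table \ref{growth}: $E_0(\QD)_{\operatorname{tors}}\ne E_0(\Q)_{\operatorname{tors}}$ only for $D=-2$. For every other square-free $D$ satisfying the hypothesis this gives $E_0(\QD)=E_0(\Q)$.

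It is worth checking the table against the explicit curve rather than relying on it blindly. Writing $E_0: y^2=(x-1)(x-3)(x+3)$, we get $E_0(\Q)_{\operatorname{tors}}\cong \Z/2\times\Z/4$. The full $2$-torsion is already rational, so growth in a quadratic field can only come from halving points of order $2$ or $4$, or from $3$-torsion. Halving a $2$-torsion point $(e,0)$ requires $e-e'$ and $e-e''$ to be squares. The differences of the roots are $\pm2,\pm4,\pm6$. So the only candidate fields are $\Q(\sqrt{\pm2})$, $\Q(\sqrt{\pm6})$, $\Q(\sqrt{-1})$ and $\Q(\sqrt{\pm3})$. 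A short check of which halvings actually become possible should leave only $D=-2$ (and possibly $D=2$), consistent with the table.

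The exceptional set in the lemma is $\{\pm2\}$, although the table lists only $D=-2$. The proof therefore handles $D=2$ by simply excluding it, which is harmless: for $D=2$ one may also have torsion growth, or we just do not claim anything. So the statement follows for all $D\ne\pm2$.

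The main obstacle is the reliance on the LMFDB data for torsion growth. If one wants a self-contained argument, the step above bounding quadratic torsion via halving must be carried out carefully: solve $x-1,\,x-3,\,x+3$ being squares up to the twist, and exclude $3$-torsion. The latter can be excluded because $E_0(\QD)_{\operatorname{tors}}$ contains $\Z/2\times\Z/4$, and by Kamienny–Kenku–Momose no quadratic field has torsion $\Z/2\times\Z/12$.
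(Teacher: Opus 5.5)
There is a genuine gap: your claim that $\operatorname{rank}_{\Z}E_0(\Q)=0$ is false, and the whole argument rests on it. The point $P_0=(5,8)$ lies on $E_0\colon y^2=(x-1)(x-3)(x+3)$. Since $|E_0(\mathbb{F}_5)|=8$ and $|E_0(\mathbb{F}_7)|=12$, the rational torsion has order dividing $4$, so $E_0(\Q)_{\operatorname{tors}}=E_0[2]\cong\Z/2\times\Z/2$. It is not $\Z/2\times\Z/4$: your own halving criterion rules that out, because the pairs of root differences $\{-2,4\}$, $\{2,6\}$, $\{-4,-6\}$ are never both rational squares. Hence $P_0$ has infinite order, and $E_0(\Q)=\langle T_1\rangle\oplus\langle T_2\rangle\oplus\langle P_0\rangle$ has rank $1$, as the paper uses. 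This rank is also what produces the infinitely many progressions in Section~\ref{sect_para}. The remark you cite covers $D\in\{-1,\pm 2,3\}$, not $D=1$.

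With rank $1$, Kramer's formula plus equal torsion only gives an abstract isomorphism $E_0(\QD)\cong E_0(\Q)$, with $E_0(\Q)$ of finite index. It does not give equality. The case $D=2$ shows the obstruction is real. There $\operatorname{rank}_{\Z}E_0^{2}(\Q)=0$ and Table~\ref{growth} shows no torsion growth, yet $R=(1+2\sqrt{2},-4)\in E_0(\Q(\sqrt{2}))\setminus E_0(\Q)$ satisfies $2R=P_0$. So your argument, run verbatim at $D=2$, would prove a false statement. The exclusion of $D=2$ comes from a point of infinite order becoming $2$-divisible, not from torsion growth.

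What is missing is an index argument, which is essentially the paper's proof. Let $R\in E_0(\QD)$ and let $\sigma$ be the nontrivial automorphism.
\begin{itemize}
\item $R+\sigma R\in E_0(\Q)$, while $\sigma(R-\sigma R)=-(R-\sigma R)$, so $R-\sigma R$ comes from $E_0^D(\Q)$ and is torsion by hypothesis.
\item For $D\neq-2$ this torsion point lies in $E_0(\QD)_{\operatorname{tors}}=E_0[2]$, so $2R\in E_0(\Q)$. Write $2R=n_1T_1+n_2T_2+mP_0$.
\item If $m$ is even, then $R-\frac{m}{2}P_0\in E_0(\QD)[4]=E_0[2]$, so $R\in E_0(\Q)$.
\item If $m$ is odd, you must decide whether some $P_0+T$ with $T\in E_0[2]$ is halvable over $\QD$. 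Since all $2$-torsion is rational, this happens exactly when $x-1$, $x-3$, $x+3$ are all squares in $\QD$.
\item The four points $P_0+T$ are $(5,8)$, $(0,-3)$, $(-1,4)$, $(9,-24)$. They give the triples $\{4,2,8\}$, $\{-1,-3,3\}$, $\{-2,-4,2\}$, $\{8,6,12\}$. Only the first consists of squares in a quadratic field, namely $\Q(\sqrt{2})$.
\end{itemize}
This is exactly what singles out $D=2$.

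Separately, your exclusion of $3$-torsion via Kamienny--Kenku--Momose fails once the base torsion is $\Z/2\times\Z/2$, since $\Z/2\times\Z/6$ does occur. For that step you would have to rely on the torsion-growth table.
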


\begin{proof}
Table~\ref{growth} shows that the torsion subgroup grows only when $D=-2$. Thus, for $D\neq -2$, we have $E_0(\Q(\sqrt{D}))_{\operatorname{tors}}=E_0(\Q)_{\operatorname{tors}}$. Therefore, if $D\neq -2$ and $\operatorname{rank}_{\Z}E_0^{D}(\Q)=0$, we deduce that $E_0(\Q(\sqrt{D}))$ is isomorphic to $E_0(\Q)$. Let us prove that the only case where $E_0(\Q(\sqrt{D}))\neq E_0(\Q)$ under these assumptions occurs when $D=2$. Assume that there exists $P\in E_0(\Q)$ and $R\in E_0(\Q(\sqrt{D}))\setminus E_0(\Q)$ such that $P=nR$ for some $n\in \Z$. Let $\sigma\in \operatorname{Gal}(\Q(\sqrt{D})/\Q)$ be the nontrivial automorphism. Then $P=n\,\sigma(R)$, and hence $n(R-\sigma(R))=\mathcal{O}$. This implies that $R-\sigma(R)\in E_0(\Q)[n]$. Since $E_0(\Q)_{\operatorname{tors}}=E_0[2]$, we conclude that $n=2$. 

Now recall that $E_0(\Q)=\langle T_1 \rangle \oplus \langle T_2 \rangle \oplus \langle P_0 \rangle$, where $T_1=(-3,0)$, $T_2=(1,0)$, and $P_0=(5,8)$. Thus any $P\in E_0(\Q)$ can be written as $ P=n_1T_1+n_2T_2+mP_0$ for some $n_1,n_2\in \{0,1\}$ and $m\in \Z$. It is therefore enough to check the existence of such $R$ when $P=n_1T_1+n_2T_2+P_0$ for some $n_1,n_2\in \{0,1\}$. Let us prove the above fact: suppose first that $m=2n$ for some $n\neq 0$. Then we would obtain $2(R+nP_0)=n_1T_1+n_2T_2$,  which is impossible since the left-hand side has infinite order while the right-hand side has order  { dividing} $2$. Suppose instead that $m=2n+1$ for some $n\neq 0$. Then we obtain $2(R+nP_0)=n_1T_1+n_2T_2+P_0$, which gives the desired relation.  

{ Finally, one computes that the only solutions to $2 R=n_1T_1+n_2T_2+P_0$, $n_1,n_2\in \{0,1\}$, are given by $n_1=n_2=0$ and $R=(1+2\sqrt{2},-4)+T$ with $T\in E[2]$.}  This completes the proof.
\end{proof}

{\bf Acknowledgments.} Nguyen Xuan Tho is also supported by Vietnam Institute for Advanced Study in Mathematics (VIASM) from April 2025 to May 2025. The author really appreciates the Institute for their help and funding. We would like to thank Xavier Xarles for pointing out that the elliptic curves $E_1,E_4$ and $E_6$ belongs to the same $\mathbb Q$-isogeny class. The authors would like to thank the referees for their careful reading of the manuscript and for their valuable suggestions. In particular, we are grateful to the last referee, whose extensive and detailed comments greatly enhanced the overall quality of the paper. We are sincerely thankful for their contributions.

\

All computations in this paper are done in \texttt{Magma} \cite{Magma} and \texttt{Mathematica} \cite{mathematica}. 

\section{Setting}
Let $D$ be a square-free integer and $K$ be a quadratic extension of $\mathbb{Q}(\sqrt{D})$. If $S\subseteq K$, we denote by $S^2$ to the set of squares in $S$. Let  $a, b, c, d, e \in K$ be such that $\left(a^{2}, b^{2}, c^{2}, d^{2}, e^{2}\right)$ is an arithmetic progression. Then $[a: b: c: d: e]\in V(K) \subset \mathbb{P}^4(K)$, where $V$ is the genus 5 curve defined by the system of equations
$$
V\,\,:\,\, \{a^{2}+e^{2}=2 c^{2}\,,\,\, a^{2}+c^{2}=2 b^{2}\,,\,\,c^{2}+e^{2}=2 d^{2} \}.
$$
Equation $a^2+e^2=2c^2$ has the parametrization
\begin{equation}\label{eq2}
[a: c: e]=\left[t^{2}-2 t-1: t^{2}+1: t^{2}+2 t-1\right] 
\end{equation}
with $t \in K$. Applying  \eqref{eq2} in the equations of $V$ gives, up to sign,
\begin{equation}\label{eq3}
\left\{
\begin{array}{l}
a = t^{2}-2 t-1, \\
b= \sqrt{t^{4}-2 t^{3}+2 t^{2}+2 t+1}, \\
c= t^{2}+1, \\
d= \sqrt{t^{4}+2 t^{3}+2 t^{2}-2 t+1}, \\
e= t^{2}+2 t-1.
\end{array}
\right.
\end{equation}
Replacing $t$ by $-t$ in \eqref{eq3} gives the arithmetic progression $(e^2,d^2,c^2,b^2,a^2)$, equivalent to $(a^2,b^2,c^2,d^2,e^2)$, and the constant arithmetic progressions correspond to the cases $t=0,\pm 1$.

Let $G(x)=x^{4}-2 x^{3}+2 x^{2}+2 x+1$. By \eqref{eq3}, we have 
\begin{equation}\label{eq6}
G(\pm t)\in {K}^2.
\end{equation}
Assume $t\ne 0$. Then 
\begin{equation}\label{eq7}
\frac{G(\pm t)}{t^2}= t^{2}+\frac{1}{t^{2}}\mp 2\left(t-\frac{1}{t}\right)+2=s^2\mp 2s+4 \in {K}^{2},
\end{equation}
where $s=t-\frac{1}{t}$. Since $s^{2}+4=\left(t+\frac{1}{t}\right)^{2} \in K^{2}$, we have
\[
\begin{cases}\left(s^{2}+4\right)\left(s^{2}-2 s+4\right)\left(s^{2}+2 s+4\right) \in K^{2},\\ s^{2}\left(s^{2}-2 s+4\right)\left(s^{2}+2 s+4\right) \in K^{2}.
\end{cases}
\]
Let $r=s^{2}$. Then
\begin{equation}\label{eq8}
\begin{cases}
(r+4)\left(r^{2}+4 r+16\right) \in K^{2}, \\
r\left(r^{2}+4 r+16\right) \in K^{2}. 
\end{cases}
\end{equation}
We say that an arithmetic progression is elementary if it is equivalent to a constant arithmetic progression or if an element in the arithmetic progression is $0$. Table \ref{elementary} shows the corresponding values of $t,s$, and $r$ for elementary arithmetic progressions of five squares, where $i=\sqrt{-1}$. 
\begin{longtblr}
[caption = {Elementary arithmetic progressions}, label=elementary]
{cells = {mode=imath},hlines,vlines,colspec  = cccccc}
%----------------------------------------------
 t & s & r & \mbox{case} & (a^2,b^2,c^2,d^2,e^2) & [a: b: c: d: e]  \\
%----------------------------------------------
0 & - & - & \SetCell[r=2]{c} \text{constant} & (1,1,1,1,1) & [\pm 1:\pm 1: \pm 1 :\pm 1 : \pm 1]\\
%----------------------------------------------
\pm 1 & 0 & 0 &  & (a^2,a^2,a^2,a^2,a^2) & [\pm a:\pm a: \pm a :\pm a : \pm a]\\
%----------------------------------------------
\pm i & \pm 2 i & -4 & c=0 & (-2,-1,0,1,2) & [ \pm i \sqrt{2}: \pm i: 0: \pm 1: \pm \sqrt{2}] \\
%----------------------------------------------
1\pm \sqrt{2} & 2 & 4 & a=0 & (0,1,2,3,4) & [0:\pm 1:\pm \sqrt{2}:\pm \sqrt{3}:\pm 2]\\
%----------------------------------------------
-1\pm \sqrt{2} & -2 & 4 & e=0 & (4,3,2,1,0)& [\pm 2:\pm \sqrt{3} : \pm \sqrt{2} : \pm  1 : 0]\\
%----------------------------------------------
G(t)=0 & 1\pm\sqrt{-3} & -2\pm 2 \sqrt{-3} & b=0 & (-1,0,1,2,3)& [ \pm i: 0: \pm 1: \pm \sqrt{2}: \pm \sqrt{3}]\\
%----------------------------------------------
G(-t)=0 & -1\pm\sqrt{-3} & -2\mp 2 \sqrt{-3} & d=0 & (-3,-2,-1, 0,1)& [ \pm \sqrt{3}: \pm \sqrt{2} : \pm 1 : 0 :\pm i] %\\
%----------------------------------------------
\end{longtblr}
Note that the case $b=0$ or $d=0$ is not possible since  $\Q(a,b,c,d,e)=\Q(i,\sqrt{2},\sqrt{3})$ is a degree $8$ number field. 

\

The proof of Theorem \ref{main} and \ref{maincor} relies on the following result:

\begin{prop}\label{prop}
Let $D$ be a square-free integer and $K$ be a quadratic extension of $\mathbb{Q}(\sqrt{D})$. Let  $a, b, c, d, e \in K$ be such that $\left(a^{2}, b^{2}, c^{2}, d^{2}, e^{2}\right)$ is a non elementary arithmetic progression given by \eqref{eq3}. If $\operatorname{rank}_{\Z}E_1^{\pm D}(\Q)= 0\,,$ then
\begin{enumerate}
\item\label{prop1} $r \in \QD$,
\item\label{prop2} $s \in \QD$,
\item\label{prop3} $t\in\QD$,
\item\label{prop4}  $t\in\Q$, if $D\ne \pm 2$ and  $\operatorname{rank}_{\Z}E_0^{D}(\Q)=0$.
\end{enumerate}
\end{prop}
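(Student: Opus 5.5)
The plan is Mordell's method. Write $F=\QD$ and $K=F(\sqrt{\beta})$, and let $\sigma$ be the nontrivial automorphism of $K/F$. Throughout I use that $\operatorname{rank}_{\Z}E_1^{\pm D}(\Q)=0$ makes $C_k(F)$ finite for $k=1,\dots,6$, with the explicit sets given in Tables \ref{points} and \ref{pointsC4}. The idea is to push the data up the chain $r\to s\to t$: at each stage, if the coordinate is not already in $F$, I use $\sigma$ to produce an $F$-rational point on one of the curves $C_k$, and the tables then leave only finitely many possibilities. Non-elementary progressions exclude the values of Table \ref{elementary}, namely $r\in\{0,\pm4,-2\pm2\sqrt{-3}\}$.

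\emph{Step 1 ($r\in F$).} By \eqref{eq8}, $r$ gives a $K$-point on both $y_1^2=(r+4)(r^2+4r+16)$ (which is $C_1$) and $y_2^2=r(r^2+4r+16)$ (which is $C_2$). The fibre product $X$ of these two covers of the $r$-line has genus $2$. Its three quadratic subcovers are $C_1$, $C_2$ and the conic $w^2=r(r+4)$. Hence $\operatorname{Jac}(X)$ is $F$-isogenous to $C_1\times C_2$, and $\operatorname{Jac}(X)(F)$ is finite.

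Now take the point $P\in X(K)$ and suppose $r\notin F$. Then $P+\sigma(P)$ is an $F$-rational effective divisor of degree $2$. The map $\operatorname{Sym}^2X\to\operatorname{Jac}(X)$ is injective away from the hyperelliptic $g^1_2$. So either $\sigma(P)=\iota P$, where $\iota$ is the hyperelliptic involution, or $P+\sigma(P)$ lies in a finite, explicitly computable set.

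To use this I need a hyperelliptic model. I parametrize the conic $w^2=r(r+4)$ by $u$, for instance $r=4/(u^2-1)$, which turns $X$ into $y^2=\text{sextic}(u)$. The relation $\sigma(P)=\iota P$ means $u\in F$, hence $r\in F$. In practice I expect to carry this out concretely, as Mordell does: write $r=p+q\sqrt\beta$ and $y_j=u_j+v_j\sqrt\beta$, split each equation into its $F$-parts, and read off points of $C_1(F)$, $C_2(F)$ and $C_3(F)$ from the traces and norms. The finitely many exceptional divisors then come from the listed points and lead only to elementary $r$. I expect this step to be the main obstacle. The reason is that the two elliptic curves separately do not suffice, since for a fixed $Q$ the pairs $\{P,Q-P\}$ form a whole $\mathbb P^1$. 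The argument therefore has to genuinely use both conditions together, and the case analysis over $D=-1,\pm3$ uses the extra points in the tables.

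\emph{Step 2 ($s\in F$).} Now $r\in F$ and $s^2=r$. If $s\notin F$, then $\sigma(s)=-s$. Taking the norm of $s^2-2s+4\in K^2$ gives $(r+4)^2-4r=r^2+4r+16\in F^2$. The conditions $r\in F$, $r^2+4r+16\in F^{2}$ and $r(r^2+4r+16)\in K^2$ then combine into an $F$-point on $C_1$, $C_2$ or $C_3$ with rational $r$. The tables force $r\in\{0,-4,4,\dots\}$, which is elementary and therefore excluded.

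\emph{Step 3 ($t\in F$).} The element $t$ is a root of $X^2-sX-1$ with $s\in F$. If $t\notin F$, then $\sigma(t)=-1/t$. Applying $\sigma$ to $G(t)/t^2\in K^2$ and multiplying gives the norm $N_{K/F}(t^2+1/t^2-2s+2)\in F^2$. Expressed in $s$ (or in $t+1/t$, whose square is $s^2+4\in F$), this yields an $F$-point on the quartic $C_5$ or $C_6$. From Table \ref{points} its points have $x\in\{0,\pm1\}$ (plus the special $D$), which again gives only elementary values.

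\emph{Step 4 ($t\in\Q$).} With $t\in F$, the condition $G(t)\in K^2$ says that either $G(t)\in F^2$, giving an $F$-point on $C_0\cong E_0$, or $G(t)\in\beta F^{2}$. In the latter case I swap in $G(-t)$, which must then also lie in $\beta F^2$, and the product gives a point on $C_0$ or its twist over $F$. Under $\operatorname{rank}_{\Z}E_0^D(\Q)=0$ and $D\ne\pm2$, Lemma \ref{newlemma} gives $E_0(F)=E_0(\Q)$. Transporting back through the isomorphism $C_0\to E_0$, which is defined over $\Q$, gives $t\in\Q$.
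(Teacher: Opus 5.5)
The chain $r\to s\to t\to t\in\Q$ is the paper's, but Steps 2--4 each use the wrong condition or land on the wrong curve, and Step 3 yields nothing at all. Once $s\in F$, you have $t^2+t^{-2}=s^2+2\in F$, so $G(t)/t^2=s^2-2s+4$ already lies in $F$ (indeed $G(-1/t)=t^{-4}G(t)$, so $\sigma$ fixes it). Its norm is therefore $(s^2-2s+4)^2$, a square for trivial reasons, and no point on $C_5$ or $C_6$ comes out. What works is a square-class argument. Here $K=F(\sqrt{s^2+4})$ and $G(t)G(-t)=t^4(s^4+4s^2+16)$, so $s^4+4s^2+16$ lies in $F^{2}$ or in $(s^2+4)F^{2}$. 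This gives $(s,\cdot)\in C_5(F)$ or $(s^2,\cdot)\in C_1(F)$, and Table~\ref{points} then leaves only elementary $s$ (the extra $x$-coordinates $-8,-4\pm4i,4\pm4\sqrt3$ on $C_1$ are not squares in $F$). The paper goes differently: $w=t^4$ generates $K$ and $w(w^2+14w+1)\in K^2$, Mordell's residual point lies on $C_4(F)$, and matching the minimal polynomial $x^2-(s^4+4s^2+2)x+1$ of $w$ leads to $C_5$.

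In Step 2 the norm computation is fine, but the condition you then use, $r(r^2+4r+16)\in K^2$, is automatic because $r=s^2$. What you need is $r+4=(t+1/t)^2\in K^2$, i.e.\ $r+4\in F^2$ (a point on $C_1$) or $r(r+4)\in F^2$ (a point on $C_3$). Even then, the non-elementary table values $r=-8,-4\pm4i,4\pm4\sqrt3$ (on $C_1$) and $r=-2,-2\pm2i,-2\pm2\sqrt3$ (on $C_3$) must still be excluded, by checking that $r+4$, resp.\ $r^2+4r+16$, is not a square in $F$. In Step 4, suppose $G(t)$ and $G(-t)$ both lie in $\beta F^{2}$. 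Then the product $G(t)G(-t)=t^8+14t^4+1\in F^2$ is a point $(t^2,\cdot)$ on $C_6$, not on $C_0$ or a twist of it; a twist of $C_0$ by an arbitrary $\beta\in F$ is not controlled by the hypotheses. It is $C_6(F)=C_6(\Q)$ that forces $t^2\in\{0,\pm1\}$.

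Step 1 is only a plan, and its announced outcome is false. For $D=-3$, a root $r$ of $x^2+7x+16$ satisfies $(r+4)(r^2+4r+16)=(r+8)^2$ and $r(r^2+4r+16)=-3r^2=(\sqrt{-3}\,r)^2$. So it gives a point of $X(K)$ with $K=\Q(\sqrt{-3},\sqrt5)$, $r\notin F$ and $r$ not elementary. Hence \eqref{eq8}, i.e.\ the curve $X$, cannot by itself give $r\in F$. The paper's computation is the concrete form of your ``use both conditions together'': write $y_k=\alpha_k+\beta_k r$, take the residual points on $C_1(F)$ and $C_2(F)$, and solve for slopes $(\beta_1,\beta_2)$ making the two quadratic cofactors coincide. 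It meets exactly this case as $(\beta_1,\beta_2)=(1,\pm\sqrt{-3})$, $Q(x)=x^2+7x+16$. It disposes of it only by returning to $t$: then $s=(\pm1\pm\sqrt{-15})/2$, $t$ is a root of $x^4\pm x^3+2x^2\mp x+1$, $\Q(t)=\Q(\sqrt{-3},\sqrt5)$, and neither $G(t)$ nor $G(-t)$ is a square there. Your genus-2 framing is sound as far as it goes: the $u$-line is the hyperelliptic quotient, so $\sigma(P)=\iota P$ does force $r\in F$. But making it effective needs $\operatorname{Jac}(X)(F)$ exactly, which the isogeny with $C_1\times C_2$ and Table~\ref{points} do not hand you directly. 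The resulting exceptional divisors must then be tested against $G(\pm t)\in K^2$, not just against Table~\ref{elementary}.
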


\begin{remark}
For any $t\in\Q$, the parametrization \eqref{eq3} gives the arithmetic progressions of five squares 
$$
((t^2-2t-1)^2,G(t),(t^2+1)^2,G(-t),(t^2+2t-1)^2),$$
that is, in general, properly defined over the biquadratic number field $\Q(\sqrt{G(t)},\sqrt{G(-t)}\,)$.
\end{remark} 
\section{Proof of Proposition \ref{prop}\,\eqref{prop1}}
Let \( t \in K \setminus \{0, \pm 1, \pm i, \pm 1 \pm \sqrt{2}\} \) corresponding to a non-elementary arithmetic progression, as described in ~\eqref{eq3}, and let \( r = (t - 1/t)^2 \). We show that $r \in \Q(\sqrt{D})$ under the hypothesis $\operatorname{rank}_{\Z}E_1^{\pm D}(\Q)=0$.

Assume $r \notin \Q(\sqrt{D})$. Then $K=\Q(\sqrt{D})(r)$. By \eqref{eq8}, there exist $\alpha_1,\alpha_2 ,\beta_1,\beta_2 \in \Q(\sqrt{D})$ such that
\begin{equation}\label{eq9-10}
\begin{array}{rcl}
(r+4)\left(r^{2}+4 r+16\right)&=&(\alpha_1+\beta_1 r)^{2}, \\
r\left(r^{2}+4 r+16\right)&=&(\alpha_2+\beta_2 r)^{2}. 
\end{array}
\end{equation}
Since $[K: \Q(\sqrt{D})]=2$, $r$ is a root of a quadratic irreducible polynomial $Q(x)$ in $\Q(\sqrt{D})[x]$. By \eqref{eq9-10}, there exist $r_{1}, r_{2} \in \Q(\sqrt{D})$ such that
\begin{equation}\label{eq11-12}
\begin{array}{rcl}
(x+4)\left(x^{2}+4 x+16\right)-(\alpha_1+\beta_1 x)^{2}&=&Q(x)\left(x-r_{1}\right),\\
x\left(x^{2}+4 x+16\right)-(\alpha_2+\beta_2 x)^{2}&=&Q(x)\left(x-r_{2}\right).
\end{array}
\end{equation}
Hence, $\left(r_{k}, \alpha_k+\beta_k r_{k}\right)\in C_k(\Q(\sqrt{D}))$ for $k=1,2$, where
$$
\begin{array}{l}
C_{1}\colon y^{2}=(x+4)\left(x^{2}+4 x+16\right),\\
C_{2}\colon y^{2}=x\left(x^{2}+4 x+16\right).     
\end{array}
$$
For  $P_1=(x_1,y_1)\in C_1(\Q(\sqrt{D}))$ and $P_2=(x_2,y_2)\in C_2(\Q(\sqrt{D}))$, we have
$$
\begin{array}{rcl}
(x+4)\left(x^{2}+4 x+16\right)-(y_1-\beta_1 x_1+\beta_1 x)^{2}&=&Q(x)\left(x-{x_{1}}\right),\\
x\left(x^{2}+4 x+16\right)-(y_2-\beta_2 x_2+\beta_2 x)^{2}&=&Q(x)\left(x-{ x_{2}}\right).
\end{array}
$$
Now, since $x-x_1$ and $x-x_2$ divide the polynomials on the left-hand side, performing the division explicitly yields the following 
$$
\begin{array}{l}
Q(x)=x^2+(8 - \beta_1^2 + x_1)x+32 + 8 x_1 + \beta_1^2 x_1 + x_1^2 - 2 \beta_1 y_1,\\
Q(x)=x^2+(4 - \beta_2^2 + x_2)x+16 + 4 x_2 + \beta_2^2 x_2 + x_2^2 - 2 \beta_2 y_2.
\end{array}
$$
Upon equating the two polynomials, we obtain the following system of equations:
 $$
\left\{
\begin{array}{l}
x_1 \left(\beta_1^2+x_1+8\right)-2 \beta_1 y_1-x_2 \left(\beta_2^2+x_2+4\right)+2 \beta_2 y_2+16=0,\\
\beta_1^2-\beta_2^2-x_1+x_2-4=0.
\end{array}
\right.
$$
We find $(\beta_1,\beta_2)$ by solving the above system of quadratic equations. For $k=1,2$, let $m_{\beta_k}(x)$ be the minimal polynomial of $\beta_k$ and let $d_k$ be the degree of $m_{\beta_k}(x)$. Given $D$, we give a table where for each point $P_1\in C_1(\Q(\sqrt{D}))$ and each point $P_2\in C_2(\Q(\sqrt{D}))$, we show $(\beta_1,\beta_2)$ and the corresponding quadratic polynomial $Q(x)$ if $d_1,d_2\leq 2$, otherwise we show $[d_1,d_2]$ or $\{\}$ if there does not exist solution $(\beta_1,\beta_2)$. 

For $k=1,2$, replacing $P_k$ by $-P_k$ is equivalent to changing $\beta_k$ by $-\beta_k$. Therefore, we only show one element in each set $\{\pm P_k\}$. The $\QD$-rational points in the curves $C_1$ and $C_2$ appear in the Table \ref{points}.\\
\begin{itemize}
    \item Case $D\ne -1,\pm 3$. Then 
\begin{center}
\begin{longtable}{|c|c|}
\caption{Case $D\neq -1,\pm 3$}
\label{Dneqminus1pm3}\\
\hline
%----------------------------------------------
\backslashbox{$P_1$}{$P_2$} & $(0,0)$ \\
\hline
%----------------------------------------------
$(-4,0)$ & $\begin{array}{c} (\beta_1,\beta_2)=(0,0) \\ Q(x)=x^2+4x+16 \end{array}$ \\
\hline
%----------------------------------------------
 $(0,8)$ &  $\begin{array}{c} (\beta_1,\beta_2)=(1,\pm \sqrt{-3}) \\ Q(x)=x^2+7x+16 \end{array}$ \\
 \hline
%----------------------------------------------
\end{longtable}
\end{center}  
 The case $(\beta_1,\beta_2)=(0,0)$ gives $Q(x)=x^2+4x+16$. Hence, $r^2+4r+16=Q(r)=0$. Thus $r=-2\pm 2\sqrt{-3}$, which is also not possible since it corresponds to elementary arithmetic progressions (see Table \ref{elementary}).
The case $(\beta_1,\beta_2)=(1,\pm \sqrt{-3})$ is not possible since $\sqrt{-3}\notin \QD$. \\
 \item Case $D=3$. Then
\begin{center}
\begin{longtable}{|c|c|c|}
\caption{Case $D=3$}
\label{Deq3}\\
\hline
\backslashbox{$P_1$}{$P_2$} & $(0,0)$ & $(4,8\sqrt{3})$\\
\hline 
%----------------------------------------------
$(-4,0)$ & $\begin{array}{c}(0,0) \\ x^2+4x+16 \end{array}$ & $[4,2]$ \\
\hline 
%----------------------------------------------
$(0,8)$ &  $\begin{array}{c} (1,\pm \sqrt{-3}) \\ x^2+7x+16 \end{array}$ & $[4,4]$ \\
\hline 
%----------------------------------------------
$(4\pm 4 \sqrt{3},24\pm 16\sqrt{3})$ & $[4,8]$& $[8,8]$\\
\hline 
\end{longtable}  
\end{center}
%----------------------------------------------
The proof is analogous to the case  $D\neq -1,\pm 3$.

\newpage

 \item Case $D=-1$. Then
 \begin{center}
\begin{longtable}{|c|c|c|c|}
\caption{Case $D=-1$}
\label{Deqminus1}\\
\hline 
\backslashbox{$P_1$}{$P_2$} & (0,0) & $(\pm 4i,8i)$ & $(-4,8i)$\\
\hline 
 $(-4,0)$ & $\begin{array}{c}(0,0) \\ x^2+4x+16 \end{array}$ & [8,4] & $\begin{array}{c}(\pm\sqrt{3},-i) \\ x^2+x+4 \end{array}$ \\
\hline 
%----------------------------------------------
 \multirow{5}{*}{$(0,8)$} &   \multirow{5}{*}{$\begin{array}{c} (1,\pm \sqrt{-3}) \\ x^2+7x+16 \end{array}$} &  \multirow{5}{*}{$[8,8]$} & $[2,4]$\\\cline{4-4} 
 & & & $(-2,2i)$ \\
 & &  & $x^2+4x+64$\\\cline{4-4}
 & & & $(2,-2i)$\\
 & & & $x^2+4x$\\\cline{4-4}
  \hline 
%----------------------------------------------
$(-4\pm 4i,8)$ & $[4,8]$ & $[4,4]$ & $[8,8]$\\
\hline 
%----------------------------------------------
\end{longtable}
\end{center}
The cases $(\beta_1,\beta_2)=(0,0)$ and $(\beta_1,\beta_2)=(1,\pm \sqrt{-3})$ are similar to the case $D\ne -1,\pm 3$. The case
$(\beta_1,\beta_2)=(\pm \sqrt{3},-i)$ is not possible since $\sqrt{3}\notin \Q(i)$. If $(\beta_1,\beta_2)=(-2,2i)$ then $Q(r)=r^2+4r+64=0$. Hence, $s^4+4s^2+64=0$, which is not possible since  the polynomial $x^4+4x^2+64$ is irreducible over $\Q(i)$. If $(\beta_1,\beta_2)=(2,-2i)$ then $Q(r)=r^2+4r=0$, which is not possible since $Q(x)$ must be irreducible over $\Q(i)$. \\ 

 \item Case $D=-3$. Then 
 
 \
 
\begin{center}
\begin{longtable}{|c|c|c|c|}
\caption{Case $D=-3$}\label{Deqm3}\\
\hline
\diagbox{$P_1$}{$P_2$} & (0,0) & $(-2+2\sqrt{-3},0)$ & $(-2-2\sqrt{-3},0)$\\
%----------------------------------------------
\hline 
$(-4,0)$ & $\begin{array}{c}(0,0) \\ x^2+4x+16 \end{array}$ & $[4,1]$ & $[4,1]$\\
%----------------------------------------------
\hline 
$(0,8)$ &  $\begin{array}{c} (1,\pm \sqrt{-3}) \\ x^2+7x+16 \end{array}$  &$[4,4]$ & $[4,4]$
\\
\hline 
\multirow{3}{*}{$(-8,8\sqrt{-3})$} & \multirow{3}{*}{$[4,4]$} & $[2,4]$ & $[2,4]$ \\\cline{3-4} 
& & $(1-\sqrt{-3},0)$ & $(-1-\sqrt{-3},0)$\\
& & $x(x+2+2\sqrt{-3})$ & $x(x+2-2\sqrt{-3})$\\
\hline 
$(-2+2\sqrt{-3})$ & $[2,4]$ & $\{\}$ & $[4,2]$ \\
%----------------------------------------------
\hline 
$(-2-2\sqrt{-3})$ & $[2,4]$  & $[4,2]$ & $\{\}$ \\
\hline 
%----------------------------------------------
\end{longtable}
\end{center}
The case $(\beta_1,\beta_2)=(0,0)$  is not possible by the same argument given in the previous cases. {If $(\beta_1,\beta_2) = (1, \pm \sqrt{-3})$, then $Q(x) = x^2 + 7x + 16$. Hence, $0 = Q(s^2) = (s^2 + s + 4)(s^2 - s + 4)$. We obtain $s = (\pm 1 \pm \sqrt{-15})/2$, which yields values of $t$ whose minimal polynomials are $x^4 \pm x^3 + 2x^2 \mp x + 1$. A computation in \texttt{Magma} shows that $\Q(t) = \Q(\sqrt{-3}, \sqrt{5})$, so that $[\Q(t) : \Q(\sqrt{-3})] = 2$. However, in these cases neither $G(t)$ nor $G(-t)$ is a square in $\Q(t)$. Therefore, these cases cannot occur.} If $(\beta_1,\beta_2)=(\pm 1-\sqrt{-3},0)$ then $Q(x)=x(x+2\pm 2\sqrt{-3})$, which is not possible since $Q(x)$ must be irreducible over $\Q(\sqrt{-3})$.
\end{itemize}

Hence, $r \in \Q(\sqrt{D})$.
\section{Proof of Proposition \ref{prop}\,\eqref{prop2}}
Let \( t \in K \setminus \{0, \pm 1, \pm i, \pm 1 \pm \sqrt{2}\} \) corresponding to a non-elementary arithmetic progression, as described in ~\eqref{eq3}, and let \( s = t - 1/t\). Note that in particular $s\not\in\{ 0,\pm 2 i,\pm 2\}$. We will show that $s \in \Q(\sqrt{D})$ under the hypothesis $
\operatorname{rank}_{\Z}E_1^{\pm D}(\Q)=0$.

Assume $s \notin \Q(\sqrt{D})$. Equivalently, $r=s^2\notin \QD^2$. Then $K=\Q(\sqrt{D})(s)$. Therefore, it follows from \eqref{eq7} that
$$
s^{2}\pm 2 s+4=(\mu\pm\psi s)^{2}
$$
with $\mu, \psi \in \Q(\sqrt{D})$. By Proposition \ref{prop}\,\eqref{prop1}, $s^2=r\in \Q(\sqrt{D})$. Hence,
\begin{equation}\label{eq21}
r^{2}+4 r+16=\left(s^{2}-2 s+4\right)\left(s^{2}+2 s+4\right)=\left(\mu^{2}-s^2 \psi^{2}\right)^{2}=\left(\mu^{2}-r \psi^{2}\right)^{2} \in \Q(\sqrt{D})^{2},
\end{equation}
 Since $t^{2}-s t-1=0$, we have $t=(s\pm \alpha)/2$, where $\alpha\in K$ satisfies $\alpha^2=s^2+4=r+4$. 

\begin{itemize}
  \item Case $\alpha \in \QD$. Combining this with \eqref{eq21}, we obtain $(r+4)\left(r^{2}+4 r+16\right) \in \QD^{2}$. Hence, there exists $\beta\in\QD$ such that $(r,\beta)\in C_1(\QD)$. Table \ref{points} shows that if $D\ne -1,\pm 3$ then $r\in\{0,-4\}$, if $D=-1$ then $r\in\{0,-4,-4\pm 4i\}$, if $D=-3$ then $r=\{0,-4,-8,-2\pm 2\sqrt{-3}\}$, and if $D=3$ then $r\in\{0,-4,4\pm 4\sqrt{3}\}$. The cases $r=0,-4,-2\pm 2\sqrt{-3}$ are not possible because they correspond to elementary arithmetic progressions (see Table \ref{elementary}). The cases $r=-4\pm 4i,-8,4\pm 4\sqrt{3}$ give $r+4=\pm 4i, -4,8\pm 4 \sqrt{3}$ respectively. These values are not squares in the corresponding quadratic fields.  

  \item Case $\alpha \notin \Q(\sqrt{D})$. Since $\alpha \in K=\Q(\sqrt{D})(s)$ and $[K:\Q(\sqrt{D})]=2$, we have
$\alpha = a+bs$ for some $a,b \in \Q(\sqrt{D})$. It follows that $\alpha^2 = a^2 + b^2 s^2 + 2ab s$. As $s^2 = r \in \Q(\sqrt{D})$ and $\alpha^2 = r+4 \in \Q(\sqrt{D})$, this implies that $ab=0$. If $b=0$, then $\alpha=a \in \Q(\sqrt{D})$, which contradicts our assumption.  
Therefore $b \neq 0$, and $\alpha=b s$. We conclude that $s\alpha = a s^2 = br \in \Q(\sqrt{D})$. Thus $r(r+4) \in$ $\Q(\sqrt{D})^{2}$. Combining this with \eqref{eq21}, we obtain $r(r+4)\left(r^{2}+4 r+16\right) \in \Q(\sqrt{D})^{2}$. Then there exists $\beta\in \QD$ such that $(r,\beta)\in C_3(\QD)$. Then Table \ref{points} shows that if $D\ne -1,\pm 3$ then $r\in\{0,-4\}$, if $D=-1$ then $r\in\{0,-4,-2\pm 2i\}$, if $D=-3$ then $r=\{0,-4,-2,-2\pm 2\sqrt{-3}\}$, and if $D=3$ then $r\in\{0,-4,-2\pm 2\sqrt{3}\}$. The cases $r=0,-4,-2\pm 2\sqrt{-3}$ are not possible because they correspond to elementary arithmetic progressions (see Table \ref{elementary}). The cases $r=-2\pm 2i,-2,-2+2\sqrt{3}$ give $r^2+4r+16=8,12,24$ respectively. All these values are not squares in their corresponding quadratic fields, in contradiction with \eqref{eq21}. 
\end{itemize}
Hence, $s \in \Q(\sqrt{D})$.

\section{Proof of Proposition \ref{prop}\,\eqref{prop3}}\label{tinQD}

Let \( t \in K \setminus \{0, \pm 1, \pm i, \pm 1 \pm \sqrt{2}\} \) corresponding to a non-elementary arithmetic progression, as described in ~\eqref{eq3}. We prove that $t \in \Q(\sqrt{D})$ under the hypothesis $\operatorname{rank}_{\Z}E_1^{\pm D}(\Q)=0.$

Assume $t \notin \Q(\sqrt{D})$. If $s=0$ then it follows from $t^{2}-s t-1=0$ that $t= \pm 1$, which is not possible. So $s \neq 0$. Hence, $t^{2}=s t+1 \notin \Q(\sqrt{D})$. Since $\left(t^{2}-1\right)^{2}=(s t)^{2}$, we obtain $t^{4}=\left(s^{2}+2\right) t^{2}-1$. Note that $s^{2}+2\ne 0$, otherwise $t=(s\pm \sqrt{2})/2$, $K=\Q(\sqrt{-2},i)$ and $t^4-2t^3+2t^2+2t+1=\pm 2(\sqrt{2}\pm i)$ is not an square in $K$, in contradiction with \eqref{eq6}.  Because $t^{2} \notin \Q(\sqrt{D})$, we have $t^{4}=\left(s^{2}+2\right) t^{2}-1 \notin \Q(\sqrt{D})$. Let $w=t^4$. Since
$$
t^{8}-\left(s^{4}+4 s^{2}+2\right) t^{4}+1 =\left(t^{2}-s t-1\right)\left(t^{2}+s t-1\right)\left(t^{4}+\left(s^{2}+2\right) t^{2}+1\right) =0,
$$
$w$ is a root of
\begin{equation}\label{eq22}
F(x)=x^{2}-\left(s^{4}+4s^{2}+2\right) x+1 \in \QD[x].
\end{equation}
Since $w \notin \QD, F(x)$ is irreducible in $\Q(\sqrt{D})[x]$ and $K=\Q(\sqrt{D},w)$. By \eqref{eq6},
$$
t^{8}+14 t^{4}+1=\left(t^{4}-2 t^{2}+2 t^{2}+2 t+1\right)\left(t^{4}+2 t^{2}+2 t^{2}-2 t+1\right) \in {K}^{2} .
$$
Therefore,
$$
w\left(w^{2}+14 w+1\right)=t^{4}\left(t^{8}+14 t^{4}+1\right) \in {K}^{2} .
$$
Hence, there exist $\alpha, \beta \in \Q(\sqrt{D})$ such that
$$
w\left(w^{2}+14 w+1\right)=\left(\alpha+\beta w\right)^{2}.
$$
Thus there exists $\omega_0\in\QD$ such that
\begin{equation*}
x\left(x^{2}+14 x+1\right)-\left(\alpha+\beta x\right)^{2}=F(x)\left(x-\omega_{0}\right) . 
\end{equation*}
Therefore, $\left(\omega_{0}, \alpha +\beta \omega_{0}\right)$ is a $\QD$-rational point on the elliptic curve $C_4\,:\,y^2=x\left(x^{2}+14 x+1\right)$. For any point $P=(x_0,y_0)\in C_4(\QD)$, we have
\begin{equation}\label{eqFbeta}
x\left(x^{2}+14 x+1\right)-(y_0-\beta x_0+\beta x)^{2}=F_{\beta}(x)\left(x-x_{0}\right) . 
\end{equation}

Let $(0,0)\in C_4(\QD)$. Then we obtain $F_{\beta}(x)=x^2+(14-\beta^2)x+1$. Comparing with \eqref{eq22} gives $14-\beta^{2}=-\left(s^{4}+4 s^{2}+2\right)$. So that $s^{4}+4 s^{2}+16=\beta^{2}$. Thus $\left(s, \beta\right)$ is a $\Q(\sqrt{D})$-rational point on the curve $C_5\,:\,y^2=x^{4}+4 x^{2}+16$. By Table \ref{points}, we obtain $s\in \{0,\pm 2,\pm 2i,\pm 1\pm \sqrt{-3}\}$, which corresponds to elementary arithmetic progressions (see Table \ref{elementary}).

 Let $(1,\pm 4)\in C_4(\QD)$. Then we obtain $F_{\beta}(x)=x^2+(15-\beta^2)x+(4+\beta)^2$. Comparing with \eqref{eq22} gives $(\beta,s)$ with $\beta=-3,-5$, and the minimal polynomial of $s$ is $x^4+4x^2+8$ and $x^4+4x^2-8$ respectively, which is not possible since $s\in \QD$.

Using the data in Table \ref{pointsC4}, we elaborate Table \ref{Pandbeta}, which shows for each $P\in C_4(\QD)$, $P\notin C_4(\Q)$,  solutions $\beta$ obtained from the equality $F_{\beta}(x)=F(x)$ by comparing \eqref{eq22} with \eqref{eqFbeta}. Note that replacing $P$ by $-P$ is equivalent to changing $\beta$ by $-\beta$. So only one element in the set $\{\pm P\}$ is shown.
\begin{center}
\renewcommand{\arraystretch}{1.3}
\begin{longtable}{|c|c|} 
\caption{} \label{Pandbeta} \\
\hline
 P & $\beta$ \\
\hline
 $(-3+2\sqrt{2},8-6\sqrt{2})$    &  $\pm i+(2\pm i)\sqrt{2}\notin \Q(\sqrt{2})$\\
 \hline
 $(-3-2\sqrt{2},+8+6\sqrt{2})$    &  $\pm i+(2\mp i)\sqrt{2}\notin \Q(\sqrt{2})$\\
 \hline
 $(-7\pm4\sqrt{3},0)$ & $\pm i(2+\sqrt{3})\notin\Q(\sqrt{3})$ \\
 \hline
\end{longtable}
\end{center}
Table \ref{Pandbeta} shows that $\beta\notin\QD$ in all the cases. This completes the proof that $t\in\QD$.

\section{Proof of Proposition \ref{prop}\,\eqref{prop4}}\label{tinQ}
Let {$D\ne \pm 2$} be a square-free integer satisfying $\operatorname{rank}_{\Z}E_1^{\pm D}(\Q)= 0$ and $\operatorname{rank}_{\Z}E_0^{D}(\Q)=0$. Let \( t \in K \setminus \{0, \pm 1, \pm i, \pm 1 \pm \sqrt{2}\} \) corresponding to a non-elementary arithmetic progression, as described in ~\eqref{eq3}. We prove that $t \in \Q$. Since $[K:\QD]=2$, there exists $\alpha \in K$ such that $\alpha^2 \in \QD$ and $K=\Q(\sqrt{D},\alpha)$. By \eqref{eq6}, there exist $\gamma_1,\delta_1,\gamma_2,\delta_2\in \QD$ such that 
\begin{equation}\label{Section6.E1}
\begin{cases}
 t^{4}-2 t^{3}+2 t^{2}+2 t+1&\!\!\!\!=\left(\delta_{1}+\gamma_{1}\alpha\right)^{2}, \\
 t^{4}+2 t^{3}+2 t^{2}-2 t+1&\!\!\!\!=\left(\delta_{2}+\gamma_{2} \alpha\right)^{2}.
\end{cases}
\end{equation}
Since $t \in \Q(\sqrt{D})$ and $\alpha \notin \QD$, we have $\gamma_{1} \delta_{1}=\gamma_{2} \delta_{2}=0$. There are four cases:

\begin{itemize}
  \item  $\gamma_{1}=\gamma_{2}=0$. Then
$$
t^{8}+14 t^{4}+1=\left(t^{4}-2 t^{3}+2 t^{2}+2 t+1\right)\left(t^{4}+2 t^{3}+2 t^{2}-2 t+1\right)=\left(\delta_{1} \delta_{2}\right)^{2} \in \Q(\sqrt{D})^{2} .
$$
So $\left(t^{2}, \delta_{1} \delta_{2}\right)$ is a $\Q(\sqrt{D})$-point on the curve $C_6\,:\,y^2=x^{4}+14 x^{2}+1$. By Table \ref{points}, we have \[C_6(\QD)=C_6(\Q)=\{(0,\pm 1),(\pm 1,\pm 4)\}\] for any integer $D$ such that $\operatorname{rank}_{\Z}E_1^{D}(\Q)=0$. Therefore, $t\in \{0,\pm 1\}$, which is not possible by hypothesis.  
\item  $\delta_{1}=\delta_{2}=0$. Then
$$
t^{8}+14 t^{4}+1=\left(t^{4}-2 t^{3}+2 t^{2}+2 t+1\right)\left(t^{4}+2 t^{3}+2 t^{2}-2 t+1\right)=\left(\alpha^2 \gamma_{1} \gamma_{2}\right)^{2} \in \Q(\sqrt{D})^{2}.
$$
So $\left(t^{2}, \alpha^2 \gamma_{1} \gamma_{2}\right)\in C_6(\QD)$, which is not possible by  a similar argument to the case $\gamma_{1}=\gamma_{2}=0$.

\item  $\gamma_{1}=\delta_{2}=0$. The first equation of \eqref{Section6.E1} shows that $t^{4}-2 t^{3}+2 t^{2}+2 t+1=\delta_1^2$. So $\left(t, \delta_{1}\right)$ is a $\Q(\sqrt{D})$-point on the curve $C_0\,:\ y^2=x^{4}-2 x^{3}+2 x^{2}+2 x+1$. By Lemma \ref{newlemma}, we have $C_0(\QD)=C_0(\Q)$ for any square-free integer $D\ne \pm 2$ such that $\operatorname{rank}_{\Z}E_0^{D}(\Q)=0$. Then $t\in\Q$.
\item  $\delta_{1}=\gamma_{2}=0$. The second equation of \eqref{Section6.E1} shows that $t^{4}+2 t^{3}+2 t^{2}-2 t+1=\delta_2^2$. So $\left(-t, \delta_{2}\right)$ is a $\Q(\sqrt{D})$-point on the curve $C_0$. Similar to the case $\gamma_{1}=\delta_{2}=0$, we obtain that $t\in\Q$ if $D\ne \pm 2$.
\end{itemize}
So if $D\ne \pm 2$ then $t\in \Q$. 

\section{Proof of Theorem \ref{main} and \ref{maincor}}

Let $D$ be an square-free integer satisfying $\operatorname{rank}_{\Z}E_1^{\pm D}(\Q)= 0$ and $K$ be a quadratic extension of $\QD$. Let $(a^2,b^2,c^2,d^2,e^2)$ be an arithmetic progression of five squares properly defined over $K$. If $(a^2,b^2,c^2,d^2,e^2)$ is elementary but non-constant, then it is equivalent to one of the following
\begin{itemize}
    \item $(-2,-1,0,1,2)$ and $K=\Q(i,\sqrt{2})$. In particular, $D=-1,\pm 2$. 
    \item $(0,1,2,3,4)$ and $K=\Q(\sqrt{3},\sqrt{2})$. In particular, $D=2,3$. Note that the arithmetic progression $(4,3,2,1,0)$ is equivalent to $(0,1,2,3,4)$.
\end{itemize}
Now suppose that $(a^2,b^2,c^2,d^2,e^2)$  is not an elementary arithmetic progression. By Proposition \ref{prop} \eqref{prop3}, there exists $t\in\QD$ such that 
 $(a^2,b^2,c^2,d^2,e^2)$ is equivalent to the arithmetic progression
 $$
((t^{2}-2 t-1)^2, \beta^2, (t^{2}+1)^2, \gamma^2,(t^{2}+2 t-1)^2),
 $$
where $\beta^2=G(t)$ and $\gamma^2=G(-t)$, and $G(x)=x^4-2x^3+2x^2+2x+1$. \\

Let us prove Theorem \ref{main}:
\begin{itemize}
\item[(A)] Assume $\operatorname{rank}_{\Z}E_0^{D}(\Q)=0$. Since $D\ne \pm 2$. By Proposition \ref{prop} \eqref{prop4}, we have $t\in \Q$. In this case we have \( C_0(\mathbb{Q}(\sqrt{D})) = C_0(\mathbb{Q}) \). Hence, if the value of \( G(t) \) is a square over \( \mathbb{Q}(\sqrt{D}) \), it is already a square over \( \mathbb{Q} \). Consequently, four out of the five terms of the arithmetic progression are defined over \( \mathbb{Q} \), and therefore the full arithmetic progression of five terms is properly defined over a quadratic extension of \( \mathbb{Q} \).
\item[(B)] Assume $\operatorname{rank}_{\Z}E_0^{D}(\Q)\ne 0$ and that the class number of $\QD$ is $1$. By Proposition \ref{prop} \eqref{prop3}, we have $t\in \QD$. Since the ring of integers of $\QD$ is a unique factorization domain, there exist $\alpha,\delta\in \Q(\sqrt{D})$ such that $G(-t)=\alpha\,\delta^2$. We can conclude that \(\alpha\) is not a square in \(\mathbb{Q}(\sqrt{D})\), since otherwise there would exist a nontrivial arithmetic progression of five squares in \(\mathbb{Q}(\sqrt{D})\). However, this is impossible because \cite[Proposition~5.2]{GJX} states that a necessary condition is that \(\operatorname{rank} E_1^{\pm D}(\mathbb{Q}) \geq 2\). Then the arithmetic progression $(a^2,b^2,c^2,d^2,e^2)$ is properly defined over $K=\Q(\sqrt{D},\omega)$, where $\omega^2=\alpha$.
\end{itemize}
Now, the proof of Theorem \ref{maincor} assuming that $(a^2,b^2,c^2,d^2,e^2)$ is not an elementary arithmetic progression:
\begin{itemize}
\item $D\in\{-1,3\}$: The proof is analogous to the case $(A)$ since $\operatorname{rank}_{\Z}E_0^{D}(\Q)=0$ and {$D\ne \pm 2$}.
\item $D= \pm 2$: In this case $\operatorname{rank}_{\Z}E_0^{D}(\Q)=0$. By Proposition \ref{prop} \eqref{prop3} we have $t\in \Q(\sqrt{D})$. The proof is analogous to the case $(B)$ since the class number of $\Q(\sqrt{D})$ is $1$.
\end{itemize}

\section{Proof of Theorem \ref{six}}
We can view Theorem \ref{six} as a corollary of Theorem \ref{main} and \ref{maincor}. Let $D$ be an square-free integer satisfying $\operatorname{rank}_{\Z}E_1^{\pm D}(\Q)= 0$ and $K$ a quadratic extension of $\QD$. We will prove that no arithmetic progression of 5 squares properly defined over $K$ can be extended to one of length $6$.

Let $(a^2,b^2,c^2,d^2,e^2,f^2)$ be an arithmetic progression of six squares properly defined over $K$. Then $(a^2,b^2,c^2,d^2,e^2)$ or  $(b^2,c^2,d^2,e^2,f^2)$ is an arithmetic progressions of five squares properly defined over $K$. Assume $D\ne -1,\pm 2,3$. We can apply Theorem \ref{main}:
\begin{itemize}
\item[(A)] Assume $\operatorname{rank}_{\Z}E_0^{D}(\Q)=0$, then there does not exist any non-constant arithmetic progression of six squares properly defined over $K$, since there is not of length five.
\item[(B)] Assume $\operatorname{rank}_{\Z}E_0^{D}(\Q)\ne 0$ and that the class number of $\QD$ is $1$. Suppose that $(a^2,b^2,c^2,$ $d^2,e^2)$ is an arithmetic progressions of five squares properly defined over $K$ (the other case is completely equivalent). Then there exist $x_1,x_2,x_3,x_4,x_5,\alpha \in\Q(\sqrt{D})$ and $\alpha$ is non-square such that $(a^2,b^2,c^2,d^2,e^2)$ is equivalent to $(x_1^2,x_2^2,x_3^2,$ $\alpha \, x_4^2,x_5^2)$. Suppose that $s\in \QD$ satisfies
$$
(sa^2,sb^2,sc^2,sd^2,se^2)=(x_1^2,x_2^2,x_3^2, \alpha \, x_4^2,x_5^2).
$$
From the equalities $sa^2=x_1^2$ and $sd^2=\alpha \, x_4^2$ we obtain $\alpha\in\QD^2$ which is a contradiction.
\end{itemize}
For the cases $D=-1,3,\pm 2$, we apply Theorem \ref{maincor}:
\begin{itemize}
    \item If $D=-1$, then $K=\Q(i,\sqrt{2})$ and the unique non-constant arithmetic progression of five squares properly defined over $K$ is, up to equivalence, $(-2,-1, 0,1,2)$. Therefore there is only two ways to extend to an arithmetic progression of length $6$:
$$
(-3,-2,-1,0,1,2)\qquad\text{and}\qquad (-2,-1,0,1,2,3).
$$
However, both possibilities are impossible, as they would imply that $\sqrt{-3}\in K$ or $\sqrt{-3}\in K$.   
    \item If $D=3$, then $K=\Q(\sqrt{3},\sqrt{2})$ and the unique non-constant arithmetic progression of five squares properly defined over $K$ is, up to equivalence, $(0,1,2,3,4)$. A similar argument to the previous case leads us to $i\in K$ or $\sqrt{5}\in K$, which again results in a contradiction.
    \item If $D=2$, then there are three possibilities. The first two are when $K=\Q(\sqrt{2},\sqrt{3})$ and the arithmetic progression of five squares is $(0,1,2,3,4)$, or $K=\Q(\sqrt{2},i)$ with $(-2,-1, 0,1,2)$. Then the proofs is again as the one given in the the cases $D=-1$ and $D=3$ respectively. Finally, the last case is when $K=\Q(\sqrt{2},\sqrt{\alpha})$ with $\left(a^2,b^2,c^2,\alpha \, d^2,e^2\right)$ where $a,b,c,d,e,\alpha \in\Q(\sqrt{2})$ and $\alpha$ is non-square. In this case, the proof proceeds in the same way as in the case $D \neq -1, \pm 2, 3$, by applying Theorem \ref{main} (B)    
  \item If $D=-2$, then $K=\Q(i,\sqrt{-2})$ and the arithmetic progression of five squares is equivalent to $(-2,-1, 0,1,2)$ and the proof is equal to the case $D=-1$, or $K=\Q(\sqrt{-2},\sqrt{\alpha})$ with $\left(a^2,b^2,c^2,\alpha \, d^2,e^2\right)$ where $a,b,c,d,e,\alpha \in\Q(\sqrt{-2})$ and $\alpha$ is non-square. In this situation, the argument can be carried out in the same manner as for the case $D \neq -1, \pm 2, 3$, making use of Theorem \ref{main} (B).
\end{itemize}

\section{Parametrization in the cases $\Q(\sqrt{-2})$ and $\Q(\sqrt{2})$}\label{sect_para}
Let \(D\) be a square-free integer such that \(\operatorname{rank}_{\mathbb{Z}} E_1^{\pm D}(\mathbb{Q}) = 0\). We have proved that any non-elementary arithmetic progression of five squares over a quadratic extension of \(\QD\) is equivalent to 
\[
\bigl((t^{2}-2t-1)^2,\, \beta^2,\, (t^{2}+1)^2,\, \alpha\delta^2,\, (t^{2}+2t-1)^2 \bigr),
\]
where \(t, \alpha, \beta, \delta \in \mathbb{Q}(\sqrt{D})\) satisfy \(\beta^2 = G(t)\) and \(\alpha\delta^2 = G(-t)\). In particular, for any non-elementary arithmetic progression of five squares, there exists a point \(R \in E_0(\mathbb{Q}(\sqrt{D}))\), and conversely. Note that this correspondence is not one-to-one. This construction recovers the one developed in \cite{GJX} for the special case $D=1$. That is, for any \(R \in E_0(\mathbb{Q})\) we obtain an arithmetic progression of five squares over a quadratic field, except when \(R \in E_0(\mathbb{Q})_{\operatorname{tors}}\), in which case the progression is trivial. Note that \(\operatorname{rank}_{\mathbb{Z}} E_1^{\pm 1}(\mathbb{Q}) = 0\). Let $T_1=(-3,0)$, $T_2=(1,0)$ and $P=(0,3)$, then 
$$
E_0(\Q)=\langle T_1 \rangle \oplus \langle T_2 \rangle \oplus \langle P \rangle.
$$
If $n\in\Z$, $n\ge 0$, the set of points $S_n=\{n_1T_1+n_2T_2+m P \,:\, n_1,n_2\in\{0,1\} \,,\,m\in\{n,-n-1\}\}$ corresponds to the same arithmetic progression of five squares, up to equivalence. For example, if $n=1$ the set $S_1$ corresponds to the arithmetic progression $(7^2, 13^2, 17^2, (\sqrt{409})^2, 23^2)$.

If $D\ne \pm 2$ such that $\operatorname{rank}_{\Z}E_0^{D}(\Q)=0$, then $E_0(\QD)=E_0(\Q)$. Then any point $R$ in $E_0(\QD)$ corresponds to an arithmetic progression of five squares over a quadratic extension of \(\Q\). Therefore not properly defined over a quadratic extension of $\QD$.

Let us study briefly the cases $D=-2$ and $D=2$. First, $D=-2$. We have 
$$
E_0(\Q(\sqrt{-2}))=\langle T_1 \rangle \oplus \langle T'_2 \rangle \oplus \langle P \rangle,
$$
where $T'_2=(1-2\sqrt{-2},4+4\sqrt{-2})$ satisfies $2T'_2=T_2$. In this case, if $n\in\Z$, $n\ge 0$, the set of points $\{n_1 T_1+n_2T'_2+m P \,:\, n_1\in\{0,1\}\,,\,n_2\in\{1,3\}\,,\,m\in\{n,-n-1\}\}$ corresponds to the same arithmetic progression of five squares, up to equivalence. Table \ref{example} shows the corresponding arithmetic progressions of five squares for $n=0,1,2,3$. 
\begin{center}
\begin{longtblr} 
[caption = {Examples of arithmetic progressions.}, label=example]
{cells = {mode=imath},hlines,vlines,colspec  = cc}
n & \mbox{arithmetic progression}\\
0 & (2^2,1^2,-2,-5,-2\cdot 2^2)\\
1 & (34^2,23^2,-2\cdot 7^2,-29\cdot 5^2, -2\cdot 26^2)\\
2 & (5986^2,647^2,-2\cdot 4183^2,-70408565,-2\cdot 7274^2)\\
3 & (25953218^2,18240049^2,-2\cdot 2021231^2,-349040886543845, -2\cdot 18572978^2)\\
\end{longtblr}
\end{center}
These arithmetic progressions are of the form $\left(a^2,b^2,-2c^2,-m d^2,-2e^2\right)$, where $a,b,c,d,e,m\in\Z$ and $m>0$ is square-free. In particular, they are properly defined over $K=\Q(\sqrt{-2},\sqrt{-m})$. We have checked the above assertion for $0\leq n\le 11$. These computations support Conjecture \ref{Conj} (i).

In the case $D=2$, we have 
$$
E_0(\Q(\sqrt{2}))=\langle T_1 \rangle \oplus \langle T_2 \rangle \oplus \langle P' \rangle,
$$ 
where $P'=(1+2\sqrt{2},4)$ satisfies $2P'+T_1=-P$. In this case, if $n\in\Z$, $n>0$ odd, the set of points $S_n=\{n_1T_1+n_2T_2+m P' \,:\, n_1,n_2\in\{0,1\} \,,\,m\in\{n,-n-2\}\}$ corresponds to the same arithmetic progression of five squares, up to equivalence. Table \ref{example} shows the corresponding arithmetic progressions of five squares for $n=1,3,5$. 
\begin{center}
\begin{longtblr} 
[caption = {Examples of arithmetic progressions.}, label=example]
{cells = {mode=imath},hlines,vlines,colspec  = cc}
n & \mbox{arithmetic progression}\\
1 & (2\cdot 4^2,5^2,2\cdot 3^2,11, 2^2) \\
3 & (2\cdot 120^2,241^2,2\cdot 209^2,3\cdot 5\cdot 659, 382^2) \\
5 & (2\cdot 142324^2,221285^2,2\cdot 169443^2,11\cdot 10691\cdot 560171, 272638^2) \\
\end{longtblr}
\end{center}
In these cases, the arithmetic progressions are of the form $\left(2 a^2,b^2,2c^2,m,e^2\right)$, where $a,b,c,d,e,m\in\Z$ and $m>0$ is square-free. In particular, they are properly defined over $K=\Q(\sqrt{2},\sqrt{m})$. We have checked the above assertion for $0\leq n\le 21$ odd. These computations support Conjecture \ref{Conj} (ii).


\newpage
\begin{thebibliography}{99}
\bibitem{art1} A. Aigner, \"Uber die M\"oglichkeit von $x^4 + y^4 = z^4$ in quadratische
K\"orper, J. Math. Verein. {\bf43} (1934), 226-228.

\bibitem{Magma} W. Bosma, J. Cannon, and C. Playoust,
The \texttt{Magma} algebra system. I. The user language,
 J. Symbolic Comput. {\bf 24}(1997), no. 3-4, 235-265. 
 
\bibitem{art5} A. Bremner and S. Siksek, Squares in arithmetic progression over cubic fields,
Int. J. Number Theory {\bf 12} (2016), no. 5, 1409-1414.

\bibitem{art7} D. K. Faddeev, Group of divisor classes on the curve defined by the equation
$x^4 + y^4 = 1$,  Soviet Math. Dokl. {\bf 1} (1960), 1149-1151;  Dokl. Akad. Nauk SSSR {\bf 134}
(1960), 776-777 (Russian original). 

\bibitem{GJN1}
E. Gonz\'alez-Jim\'enez and F. Najman, Growth of torsion of  elliptic curves upon base change, Math. Comp. \textbf{89} (2020), 1457-1485.

\bibitem{GJN2}
E. Gonz\'alez-Jim\'enez and F. Najman, An algorithm for determining torsion growth of elliptic curves, Exp. Math. {\bf 32} (2023), 70-81.

\bibitem{GJT1}
E. Gonz\'alez-Jim\'enez and J.M. Tornero, Torsion of rational elliptic curves over quadratic fields, Rev. R. Acad. Cienc. Exactas F\'is. Nat. Ser. A Math. RACSAM {\bf 108} (2014), 923-934.

\bibitem{GJT2}
E. Gonz\'alez-Jim\'enez and J.M. Tornero, Torsion of rational elliptic curves over quadratic fields II, Rev. R. Acad. Cienc. Exactas F\'is. Nat. Ser. A Math. RACSAM {\bf 110} (2016), 121-143.

\bibitem{GJX} E. Gonz\'alez-Jim\'enez and X. Xarles,
Five squares in arithmetic progression over quadratic fields,
Rev. Mat. Iberoam. {\bf 29} (2013), no. 4, 1211-1238.


\bibitem{Kra} K. Kramer,
Arithmetic of elliptic curves upon quadratic extension,
Trans. Am. Math. Soc. {\bf 264} (1981) 121-135.
 
\bibitem{art12} L. J. Mordell, The Diophantine equation $x^4+y^4 = 1$ in algebraic number fields, Acta
Arith. {\bf 14} (1967/1968), 347-355. 

\bibitem{N}
F. Najman, Torsion of rational elliptic curves over cubic fields and sporadic points on $X_1(N)$,  Math. Res. Letters, {\bf 23} (2016) 245-272.

\bibitem{lmfdb} The LMFDB Collaboration, The L-functions and modular forms database, \url{https://www.lmfdb.org}, 2024, [Online; accessed 15 April 2024].

\bibitem{art17} X. Xarles,
Squares in arithmetic progression over number fields,
J. Number Theory {\bf 132} (2012), no. 3, 379-389.

\bibitem{mathematica} Wolfram Research, Inc.,
Mathematica, Version 12.1, Champaign, IL (2020). 

\end{thebibliography}
\end{document}